\swapnumbers \numberwithin{equation}{section}
\theoremstyle{break}
\newtheorem{thm}{Theorem}[section]
\newtheorem{lem}[thm]{Lemma}
\newtheorem{prop}[thm]{Proposition}
\newtheorem{cor}[thm]{Corollary}
\newtheoremstyle{break}
   {9pt}
   {9pt}
{\rmfamily}
   {}
   {\bfseries}
   {.}
   {\newline}
{}
\theoremstyle{definition}
\newtheorem{defn}[thm]{Definition}
\newtheorem{rem}[thm]{Remark}
\def\Z{{\mathbb Z}}
\def\1{\hbox{\rm\rlap {1}\hskip.03in{\rom I}}}
\def\Bbbone{{\rm1\mathchoice{\kern-0.25em}{\kern-0.25em}
{\kern-0.2em}{\kern-0.2em}I}}
\long\def\forget#1\forgotten{} %
\newcommand\ver[1]{\marginpar{\tiny Changed in Ver \VER}}
\newcounter{notecounter}
\date{\today}
\title[cell-like maps and surgery]{Cell-like maps and surgery}
\author{Michelle Daher}
\begin{document}

\begin{abstract}
We show that for any $n\geq6$, for any $m$, we can find $m$ non-homeomorphic $n$-manifolds that can be mapped by cell-like maps onto the same space $X$.
\end{abstract}

\maketitle

\section {introduction}
Cell-like maps constitute a natural useful class of maps for many reasons. Suppose that a sequence of homeomorphisms $h_n:M\to N$ of closed manifolds converges to a continuous map $h:M\to N$,
then $h$ is not necessarily a homeomorphism. It is the next best thing - a cell-like map. By definition a proper map $f:X\to Y$ is called {\em cell-like} if it has cell-like point preimages $f^{-1}(y)$ for all $y$. When $\dim X<\infty$ this condition means that $f^{-1}(y)$ can be presented as the intersection of a nested sequence of closed cells in some euclidian space. Since the empty set is not cell-like, every cell-like map is surjective.

Cell-like maps of a manifold $M$ can be constructed by means of upper semi-continuous decomposition of $M$ into cell-like sets. Many interesting cell-like maps of lower dimensional manifolds were constructed that way by Bing and his school~\cite{B}. If $q:M\to X$ is the quotient map for such a decomposition and if $X$ is a manifold, then $X$ is homeomorphic to $M$ and $q$ can be approximated by homeomorphisms. This is due to the Siebenmann Approximation Theorem~\cite{Edwards}.
In the case where $X$ is not a manifold, Lacher, in 1977, asked the following question: {\em Can one find two nonhomeomorphic closed manifolds $M_1$ and $M_2$ that map onto the same $X$ by cell-like maps ?}

The image $X$ of a cell-like map $f:M\to X$ of a manifold still carries manifold features, in particular, it is a homology manifold. A typical example of such is a manifold with certain singularities. Hence, it's quite natural to call the domain $M$ of a cell-like map $f:M\to X$  a {\em resolution} of $X$. In the late 70s, Quinn proved the uniqueness of resolution theorem~\cite{Q}, Proposition 3.2.3,  which implies that if $\dim X<\infty$, then the answer to Lacher's question is negative.

Thus, a positive answer to Lacher's question is possible only for $\dim X=\infty$.  Whether cell-like maps can raise the dimension to infinity was a big open problem in the area since the 50s. In the late 70s, R. Edwards proved that this problem is equivalent to the even older (early 30s) Alexandroff problem of {\em whether
the covering dimension coincides with the cohomological dimension for compact metric spaces}. In the 80s, Dranishnikov settled this problem \cite{Wa}.
In 2006, Dranishnikov and Ferry annouced an example of two closed non-homeomorphic 7-dimensional manifolds that can be mapped by cell-like maps onto the same space~\cite{Dr}. In 2020~\cite{Dr1},  Dranishnikov, Ferry, and Weinberger gave an example, Corollary 2.15, in dimension 6.

In this paper we answer the following version of Lacher's question: {\em How many pairwise nonhomeomorphic manifolds can be  mapped by cell-like maps onto the same space?}
Ferry proved ~\cite{F2} that we cannot have infinitely many such maniolds.
 In this paper we show that:
 \begin{thm}
 For any $n\geq6$, for any $m$, there are $m$ closed non-homeomorphic $n$-manifolds that can be mapped by cell-like maps onto the same space $X$.
 \end{thm}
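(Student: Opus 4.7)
The plan is to build on the Dranishnikov--Ferry--Weinberger construction in \cite{Dr1}, which exhibits an infinite-dimensional compact metric space $X$ admitting two non-homeomorphic resolutions by closed $n$-manifolds when $n\geq 6$. Their two resolutions correspond to two distinct elements of a controlled surgery obstruction group attached to $X$. Because this group is known to be infinite (typically $\mathbb Z$ in the simply connected case), I expect any finite number $m$ of its elements to be simultaneously realizable as genuine topological manifold resolutions of a single target.

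First, fix $n\geq 6$ and $m$, and recall the DFW target: a compactum $X$ of infinite covering dimension but finite cohomological dimension, obtained as an inverse limit of progressively more singular ANR homology manifolds, together with one given resolution $f_1\colon M_1\to X$. The set of resolutions of $X$ up to controlled equivalence is classified by a controlled structure set $\mathcal S^{\,c}(X)$ fitting into a controlled surgery exact sequence whose obstruction group contains many distinct classes.

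Next, I would realize $m$ prescribed classes $\alpha_1,\dots,\alpha_m\in\mathcal S^{\,c}(X)$ by honest closed topological $n$-manifolds $M_i$ equipped with cell-like maps $f_i\colon M_i\to X$. This uses controlled surgery realization in the style of Ferry, Pedersen and Quinn, adapted to the infinite-dimensional non-manifold target as in \cite{Dr1}: the pair $(M_1,f_1)$ serves as a base point, and each $\alpha_i$ specifies the controlled normal cobordism along which $M_1$ is to be modified into $M_i$. The pairs $(M_i,f_i)$ are then distinct in $\mathcal S^{\,c}(X)$ by construction, and they remain distinct as abstract manifolds provided the $\alpha_i$ lie in distinct orbits of the natural $\Aut(X)$-action on $\mathcal S^{\,c}(X)$.

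The principal obstacle is this last realization step. Over an infinite-dimensional non-manifold target one must verify both that each prescribed class is realized by a bona fide topological manifold rather than merely a homology manifold (the Quinn resolution obstruction must vanish throughout the cobordism) and that the $m$ resulting manifolds are pairwise non-homeomorphic rather than only inequivalent as structures on $X$. The first point is handled by arranging the classes $\alpha_i$ to lie in the image of the controlled normal map, implicit in the framework of \cite{Dr1}. The second is the main new input: it requires control over the $\Aut(X)$-orbits, with the $\Aut(X)$-quotient of the relevant piece of $\mathcal S^{\,c}(X)$ shown to have at least $m$ elements. Ferry's finiteness result \cite{F2} shows that the construction cannot continue to infinite $m$, but it leaves arbitrarily large finite $m$ open, which is exactly what the theorem asserts.
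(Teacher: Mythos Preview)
Your outline has the architecture inverted relative to the paper, and this inversion hides the actual technical content. The paper does not begin with a fixed DFW target $X$ and then try to populate a controlled structure set over it. It begins with a specific simply connected closed $n$-manifold $M$ (the boundary of a regular neighborhood of a suspended mod~$p$ Moore space, with $p\ge m$ a suitably chosen odd prime), computes $S^{CE}(M)\cong\Z_p$ or $\Z_p\oplus\Z_p$, and only \emph{afterwards} manufactures a single compactum $X$ that receives cell-like maps from every manifold representing a class in $S^{CE}(M)$. Producing this common $X$ is the main new step (Lemma~3.10, Proposition~3.11, Theorem~3.12, Theorem~3.18): one takes the several compacta $Y_i\supset M$ coming from the DFW realization of each individual class and glues them along $M$ to obtain one $Y$, then checks that the combined strict map still hits all the prescribed $\mathbb{L}$-homology classes. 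Your proposal simply assumes that the original DFW space already carries $m$ realizable classes; nothing in \cite{Dr1} asserts this, and establishing it is precisely the work the paper does.

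Two further specific problems. First, your claim that the relevant obstruction group is ``typically $\Z$ in the simply connected case'' is incorrect in this setting: by Theorem~2.11 of the paper, $S^{CE}(M)$ is contained in the \emph{odd torsion} of $S(M)$, and for the chosen $M$ it is exactly $\Z_p$ (or $\Z_p\oplus\Z_p$). The paper gets $m$ classes not from an infinite group but by choosing the prime $p\ge m$ at the outset. Second, your plan to distinguish the $M_i$ up to homeomorphism via $\Aut(X)$-orbits is vague and unnecessary. The paper's argument is concrete and bundle-theoretic: $M$ is stably parallelizable (it bounds in euclidean space), while for any nontrivial $[N,f]\in S^{CE}(M)$ the image $\eta([N,f])\in[M,G/TOP]$ is nonzero $p$-torsion, and after inverting finitely many primes $\ne p$ the fiber inclusion $G/TOP\to BTOP$ becomes an $(n{+}1)$-equivalence, so $\nu_N$ is not null-homotopic and $N\not\cong M$. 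From this one deduces that every self-homotopy-equivalence of $M$ in $S^{CE}(M)$ is homotopic to a homeomorphism, hence $S^{CE}(M)=\widetilde S^{CE}(M)$ and distinct classes give non-homeomorphic manifolds. None of this passes through $\Aut(X)$.
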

 \noindent This result is based on the work in \cite{Dr1}.
 \\       
\\\indent We recall that the topological structure group $S(M)$ on a manifold $M$ consists of classes of (simple) homotopy equivalences $f:N\to M$. In~\cite{Dr1},  the authors studied the subgroup $S^{CE}(M)$ of $S(M)$ which is defined by homotopy equivalences that factor through cell-like maps. This means that for every  $[f:N\to M]\in S^{CE}(M)$, there is $X$ and and cell-like maps $q_1:N\to X$ and $q_2:M\to X$ such that the diagram  
 \[
\xymatrix{ N \ar[rr]^{f}\ar[dr]_{q_1}& &M\ar[dl]^{q_2}\\
	& X &
}
\] 
\noindent homotopy commutes.
\\
\\\indent In this paper,  we show that often (but not always)
a common space $X$ can be chosen for all elements of $S^{CE}(M)$.
        
\begin{thm}Let $M$ be an $n$-dimensional closed connected topological manifold, $n\ge6$ and $G$ be a finitely generated subgroup of $S^{CE}(M)$, then there exists a space $X$ such that for any $[N,f]\in G$, $f:N\to M$ factors through cell-like maps to $X$.
 \end{thm}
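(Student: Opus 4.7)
The plan is to reduce the problem to a finite generating set and to combine the cell-like factorizations of those generators into a single factorization. Let $[f_1: N_1 \to M], \ldots, [f_k: N_k \to M]$ be a finite generating set of $G$. By the definition of $S^{CE}(M)$, for each $i$ there exist a space $X_i$ and cell-like maps $p_i: N_i \to X_i$, $q_i: M \to X_i$ with $q_i \simeq p_i \circ f_i$. It would suffice to construct a single space $X$ together with cell-like maps $q: M \to X$ and $r_i: X_i \to X$ such that $r_i \circ q_i \simeq q$ for every $i$; then each composition $r_i \circ p_i: N_i \to X$ is cell-like (composition of cell-like maps between ANRs is cell-like), and together with $q$ this gives the desired cell-like factorization of $f_i$ through $X$.

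The natural candidate for the combined space in the two-generator case is the pushout $X := X_1 \cup_M X_2$, formed by identifying $q_1(m) \in X_1$ with $q_2(m) \in X_2$ for every $m \in M$, with $q: M \to X$ the induced map. By iteration this would combine all $k$ generators into a single $X$. To promote the conclusion from the generators to the whole subgroup $G$, I would then verify that the set of $[N, f] \in S^{CE}(M)$ admitting a cell-like factorization through the fixed $q: M \to X$ is closed under the group operations of $S^{CE}(M)$, hence is a subgroup; since it contains the generators, it contains $G$. This group-closure step should follow from the additive structure on the cell-like structure set analyzed in~\cite{Dr1}, but has to be matched with their specific construction of the group law.

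The main obstacle is to show that the maps $M \to X$ and $X_i \to X$ arising from such a combining construction are genuinely cell-like. A fiber of $M \to X$ over a point is the saturation in $M$ of a point under the equivalence relation generated by the fibers of both $q_1$ and $q_2$; this is essentially a union of cell-like subsets of $M$, and unions of cell-like sets need not be cell-like, so the naive pushout can fail. Similarly the induced maps $X_i \to X$ collapse subsets of $X_i$ whose cell-likeness must be established. Overcoming this will require either adjusting the cell-like maps $q_i$ within their equivalence classes (via a cell-like approximation argument) so the resulting decompositions amalgamate into a cell-like one, or replacing the pushout with the quotient of $M$ by a carefully constructed upper-semicontinuous decomposition into cell-like sets that simultaneously refines the decompositions induced by each $q_i$. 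Either approach exploits properties of the specific cell-like factorizations produced by the theory developed in~\cite{Dr1}, which is why the theorem is phrased as being based on that work.
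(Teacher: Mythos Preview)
Your approach has a genuine gap, and the obstacle you flag yourself is exactly where it fails. The pushout $X_1\cup_M X_2$ gives a quotient of $M$ by the equivalence relation \emph{generated} by the fibers of $q_1$ and $q_2$; these saturations need not be cell-like, and there is no mechanism here for repairing them. Your fallback suggestions (adjusting the $q_i$ within their classes, or building a common refining decomposition) are not made precise, and in fact there is no general reason such adjustments exist. There is a second problem you do not mention: the targets $X_i$ produced by the theory are typically infinite-dimensional and are not ANRs, so your parenthetical ``composition of cell-like maps between ANRs is cell-like'' does not apply to $r_i\circ p_i$. Finally, the group-closure step is unnecessary: $S^{CE}(M)$ is odd torsion, so a finitely generated subgroup $G$ is finite and one may simply list all its elements.

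The paper's argument is entirely different and never attempts to amalgamate the individual $X_i$. Instead it works through the surgery-theoretic description
\[
S^{CE}(M)=\delta\bigl(T_{odd}\,H_{n+1}(P_2(M),M;\mathbb{L})\bigr),
\]
lifts the finitely many elements of $G$ to odd torsion classes $\gamma'_1,\dots,\gamma'_k\in H_{n+1}(P_2(M),M;\mathbb{L})$, and then constructs a \emph{single} cell-like map $q:M\to X$ for which all the $\gamma'_i$ lie in the image of $i_*:H_{n+1}(M_q,M;\mathbb{L})\to H_{n+1}(P_2(M),M;\mathbb{L})$. This last step is the heart of the matter: it uses the vanishing $\widetilde H_*(K(\mathbb{Z},3);\mathbb{L}\wedge M(p))=0$ and a mild extension of Dranishnikov's realization theorem to build one compactum $Y\supset M$ with $\dim_{\mathbb Z}(Y-M)\le 3$ realizing all the classes at once (the ``combining'' is a wedge $\coprod_M Y_i$ of the individual Dranishnikov compacta, where the cohomological-dimension bound is preserved trivially), and then converts $Y$ into a cell-like map $q:M\to X$ as in \cite{Dr1}. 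The factorization through $X$ for each $[N,f]\in G$ then follows from the identification of $S^{cc}(\stackrel{\circ}{M}_q,X)_\infty$ with the relevant homology and its forgetful map to $S(M)$. In short: the combining must happen at the level of the homological construction of $X$, not by gluing pre-existing cell-like targets.
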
       
 \noindent In particular, when $M$ is simply connected, we can find one $X$ that works for all the classes in $S^{CE}(M)$:

\begin{prop} \cite{Dr1} If $L_{n+1}(\mathbb{Z}\pi_1(M))$ has finitely generated odd torsion, then $S^{CE}(M)$ is finite.
\end{prop}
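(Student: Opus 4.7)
The plan is, following \cite{Dr1}, to identify $S^{CE}(M)$ with a subquotient of the odd torsion of $L_{n+1}(\Z\pi_1(M))$. Once such an identification is in place, the hypothesis that this odd torsion is finitely generated forces it (being torsion) to be finite, and finiteness of $S^{CE}(M)$ follows at once.

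Concretely, I would start from the topological surgery exact sequence
\[
L_{n+1}(\Z\pi_1(M))\xrightarrow{\partial}S(M)\xrightarrow{\eta}[M,G/TOP]\xrightarrow{\sigma}L_n(\Z\pi_1(M))
\]
and first argue that $S^{CE}(M)\subseteq\mathrm{Im}\,\partial$: if $f:N\to M$ factors through cell-like maps to a common ANR homology manifold $X$, the normal invariants of $M$ and $N$ pull back from the same tangential data on $X$, so $\eta(f)=0$, giving $[f]=\partial(\theta)$ for some $\theta\in L_{n+1}(\Z\pi_1(M))$; at the manifold-to-manifold level this uses Chapman--Ferry $\alpha$-approximation to control the size of the fiber. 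The main step is to refine this to show that only the odd torsion of $L_{n+1}$ realizes cell-like classes. Classes of infinite order, or of $2$-primary order, correspond to structures detected by Quinn's resolution obstruction, which takes values in $1+8\Z$ --- a purely $2$-primary invariant; the odd torsion classes, by contrast, realize cell-like maps into non-manifold ANR homology manifolds with trivial Quinn invariant but nontrivial Bryant--Ferry--Mio--Weinberger higher structure. Establishing this identification --- that the inverse of $\partial$ descends to a well-defined map $S^{CE}(M)\to L_{n+1}(\Z\pi_1(M))_{\text{odd tors}}$ with finite kernel --- is the heart of \cite{Dr1}.

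Given the identification, the proof concludes formally: a finitely generated torsion abelian group is finite, and so is any subquotient; combined with the finite kernel this makes $S^{CE}(M)$ finite. The main obstacle is the refinement step --- pinning down exactly which $\theta\in L_{n+1}$ arise from genuine cell-like factorizations --- which requires the full controlled surgery machinery for ANR homology manifolds (Bryant--Ferry--Mio--Weinberger, Pedersen--Quinn--Ranicki) together with Quinn's classification of resolutions, and is where one must cite \cite{Dr1} directly rather than reprove from scratch.
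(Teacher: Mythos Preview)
Your central claim --- that $\eta(f)=0$ for $[f]\in S^{CE}(M)$, so that $S^{CE}(M)\subseteq\mathrm{Im}\,\partial$ --- is false, and the argument you give for it cannot work. You assume the common target $X$ is an ANR homology manifold with tangential data that pulls back to both $M$ and $N$. But by Quinn's uniqueness of resolution, if $\dim X<\infty$ the two resolutions are homeomorphic; the nontrivial elements of $S^{CE}(M)$ arise precisely when $X$ is infinite-dimensional and \emph{not} an ANR, so there is no topological tangent bundle on $X$ to pull back. Indeed, Corollary~3.4 of the present paper explicitly exhibits $\beta\in S^{CE}(M)$ with $\eta(\beta)\ne 0$: the whole point of that argument is that the normal invariant $\gamma=\eta(\beta)$ detects a nontrivial difference in stable normal bundles. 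So your inclusion $S^{CE}(M)\subseteq\mathrm{Im}\,\partial$ fails, and with it the proposed identification of $S^{CE}(M)$ as a subquotient of $L_{n+1}$ collapses. Your remarks about Quinn's resolution obstruction are also off target: that invariant governs whether a homology manifold admits \emph{any} resolution, not which structures on $M$ factor through cell-like maps.

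The actual argument from \cite{Dr1}, as reflected in Theorem~2.10 here, does not go through $\mathrm{Im}\,\partial$ but instead shows $S^{CE}(M)$ lies in the odd torsion subgroup of $S(M)$ (via the identification $S^{CE}(M)=\delta(T_{odd}(H_{n+1}(P_2(M),M;\mathbb{L})))$). One then uses the surgery exact sequence in the other direction: $[M,G/TOP]$ is finitely generated since $M$ is a finite complex, so its odd torsion is finite; together with the hypothesis on the odd torsion of $L_{n+1}(\Z\pi_1(M))$, exactness forces the odd torsion of $S(M)$ --- and hence $S^{CE}(M)$ --- to be finite.
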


Also, we note that there are closed manifolds with infinitely generated $S^{CE}(M)$~\cite{Dr1}.  
\\
\\ The author would like to thank Alexander Dranishnikov on his input and guidance throughout this project.

\section{preliminaries}

All manifolds in this paper are closed, orientable, connected, topological manifolds of dimension $\ge 6$. Since ordinary homology does not behave well for non-ANRs, we will use the Steenrod extension of a generalized homology theory $H_*$,\cite{Milnor1},\cite{F1}, \cite{CP}, \cite{EH}, which satisfies the usual Eilenberg-Steenrod axioms
for (generalized) homology theories, together with the union axiom. We denote by $\widetilde H_*$ its reduced version and we define it for pairs by setting $\widetilde H_*(X,Y)=\widetilde H_*(X/Y)=H_*(X,Y)$. Recall the axioms of a reduced Steenrod homology theory:
\\
\\(a) Exactness: For a compact metrizable pair $(X,Y)$ there's a long exact sequence 
$$\cdot\cdot\cdot\to \widetilde H_i(Y) \to \widetilde H_i(X) \to \widetilde H_i(X/Y) \to \widetilde H_{i-1}(Y)\to\cdot\cdot\cdot$$
\\ (b) Milnor's additivity axiom:  Given a countable collection $X_i$ of pointed compact metric spaces and letting
$\bigvee X_i \subset \prod X_i$ be the null wedge, we have an isomorphism 
$$\widetilde H_*(\bigvee X_i) \cong \prod \widetilde H_*(X_i).$$
\\
\\We need the following results \cite{AH},\cite{BM},\cite{Y}. Denote by $M(p)$ the $\mathbb Z_p$ Moore spectrum. 
\begin{thm} If $p>1$ is an integer and $n \ge 3$, $\widetilde{H_*}(K(\pi,n);KO\wedge\Z_p)=0$
for any group $\pi$. If $\pi$ is torsion,  $\widetilde{H_*}(K(\pi,n);KO)=0$, for $n= 2$.
\end{thm}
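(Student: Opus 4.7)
The plan is to reduce the statement step-by-step to the known classical computations of $K$- and $KO$-theory of Eilenberg-MacLane spaces cited in \cite{AH}, \cite{BM}, \cite{Y}. The reduction has three levels: first on the coefficient integer $p$, then on the group $\pi$ from arbitrary to finitely generated, and finally on the finitely generated case to cyclic factors.

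\textbf{Reduction on $p$.} For arbitrary $p>1$, factor $p=\prod p_i^{a_i}$. The Moore spectrum $M(p)$ splits $p$-locally, so $KO\wedge M(p)$ splits as a wedge of $KO\wedge M(p_i^{a_i})$. Hence it suffices to prove the vanishing for each prime power. For a prime power $p^{a+1}$, use the cofiber sequence $M(p^a)\to M(p^{a+1})\to M(p)$ and induction on $a$ to reduce to the case where the coefficients are $KO\wedge M(p)$ with $p$ prime.

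\textbf{Reduction to finitely generated $\pi$.} Write any abelian group $\pi$ as a filtered colimit $\pi=\mathrm{colim}_\alpha\,\pi_\alpha$ of its finitely generated subgroups. Then $K(\pi,n)\simeq\mathrm{hocolim}_\alpha K(\pi_\alpha,n)$. For the Steenrod extension, Milnor's additivity axiom (b) together with a standard telescope/mapping cylinder argument yields a short exact sequence of $\lim$ and $\lim^1$ type over the directed system $\{\widetilde H_*(K(\pi_\alpha,n);E)\}$ converging to $\widetilde H_*(K(\pi,n);E)$. Once the finitely generated case is handled, both $\lim$ and $\lim^1$ terms vanish, giving the result for general $\pi$. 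The case $n=2$, $\pi$ torsion is treated analogously, since a torsion abelian group is a filtered colimit of finite ones, keeping us inside the torsion regime at each stage.

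\textbf{Reduction to cyclic factors.} For $\pi$ finitely generated abelian, decompose $\pi\cong\mathbb Z^r\oplus\bigoplus_i\mathbb Z/p_i^{k_i}$, giving $K(\pi,n)\simeq\prod_j K(\pi^{(j)},n)$ over cyclic factors $\pi^{(j)}$. Apply a Künneth spectral sequence (or iterate the fibration $K(\pi',n)\to K(\pi\oplus\pi',n)\to K(\pi,n)$ with an Atiyah–Hirzebruch spectral sequence argument): if the $E$-homology of one factor vanishes, the $E$-homology of the product does too. This reduces the whole statement to the two cases $\pi=\mathbb Z$ and $\pi=\mathbb Z/p^k$, which are precisely the inputs supplied by Anderson--Hodgkin \cite{AH}, Buchstaber--Mishchenko \cite{BM}, and Yoshimura \cite{Y}: $\widetilde H_*(K(\mathbb Z,n);KO\wedge M(p))=0$ and $\widetilde H_*(K(\mathbb Z/p^k,n);KO\wedge M(p))=0$ for $n\ge3$, and $\widetilde H_*(K(\mathbb Z/p^k,2);KO)=0$.

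\textbf{Main obstacle.} The subtle step is the passage from finitely generated to arbitrary $\pi$ at the level of the Steenrod extension: one must know that the $\lim^1$ contribution really vanishes, not just each finite-stage term, and one must be careful that the homotopy colimit description of $K(\pi,n)$ interacts cleanly with the Steenrod axioms (since $K(\pi,n)$ is not compact metric for infinite $\pi$). The key point making this work is that the system of subgroups is filtered and the pro-group $\{\widetilde H_*(K(\pi_\alpha,n);E)\}$ is already pro-trivial once the finitely generated vanishing is in place, so no derived-functor pathology survives.
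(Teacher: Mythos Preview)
The paper gives no proof of this statement at all: it is simply recorded as a theorem with the citations \cite{AH}, \cite{BM}, \cite{Y} attached, and is used as a black box thereafter. Your proposal therefore does strictly more than the paper, supplying a reduction scheme down to the cases actually treated in those references. The overall architecture (reduce on $p$ via Moore-spectrum cofiber sequences, reduce on $\pi$ via filtered colimits, reduce finitely generated $\pi$ to cyclic factors via K\"unneth/product splittings) is sound.

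That said, the part you flag as the ``main obstacle'' is not an obstacle, and your treatment of it is confused. For a filtered homotopy colimit of CW complexes such as $K(\pi,n)\simeq \mathrm{hocolim}_\alpha K(\pi_\alpha,n)$, any generalized \emph{homology} theory commutes with the colimit on the nose:
\[
\widetilde H_*(K(\pi,n);E)\ \cong\ \mathrm{colim}_\alpha\,\widetilde H_*(K(\pi_\alpha,n);E).
\]
There is no $\lim$/$\lim^1$ short exact sequence here; Milnor's $\lim^1$ sequence arises for \emph{cohomology} of a colimit (or, in the Steenrod setting, for homology of an \emph{inverse} limit of compacta), which is not the situation at hand. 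Relatedly, your worry that $K(\pi,n)$ is not compact metric is a red herring: $K(\pi,n)$ is a CW complex, and on CW complexes the Steenrod extension agrees with the ordinary spectrum homology, so the usual colimit argument applies without any pro-group or derived-limit subtleties. Once you replace that paragraph with the one-line colimit statement above, the reduction is clean.
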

\noindent For odd $p$, we have a chain of homotopy equivalences of spectra:
$$
\mathrm{KO}\wedge M(p)\sim \widetilde{KO}[\frac{1}{2}]\wedge M(p)\sim\mathbb L[\frac{1}{2}]\wedge M(p)\sim
\mathbb L\wedge M(p).
$$
\\ \noindent We recall the Universal Coefficient Formula
for coefficients  in $\Z_p$ for an extraordinary homology theory given by a spectrum $\mathbb E$
of CW complexes.
$$0\to H_n(K;\mathbb E)\otimes\Z_p\to H_n(K;\mathbb E\wedge M(p))\to
\mathrm{Tor}(H_{n-1}(K;\mathbb E),\,\Z_p)\to 0$$
where $\mathrm{Tor}(H,\Z_p)=\{c\in H|pc=0\}$.

\begin{defn}
Given a CW-complex $K$, denote by $P_2(K)$ the 
\\$second\  stage\  of\  the\  Postnikov\  tower\  of\  K$. It is the CW-complex obtained by attaching to $K$ cells in dimensions 4 and higher to kill the homotopy groups of $K$ in dimensions 3 and above. Thus, $P_2(K)-K$ consists of cells of dimension $\geq4$ and $\pi_n(P_2(K))=0$ for $n\geq3$.
\end{defn}

\begin{defn}
A compact subset $X$ of an ANR $M$ is $\emph{cell-like}$ if for each neighborhood $U$ of $X$ the inclusion $i:X\hookrightarrow U$ is nullhomotopic.
\end{defn}
Cell-likeness is an intrinsic property of the space $X$. It can be easily shown that if $X$ is a compact ANR, then being cell-like is equivalent to being contractible.
A classical example of a space that is cell-like but not contractible is the topologist's sine curve. For expositions on cell-like maps, see \cite{Edwards}, \cite{La1}, \cite{La2}.

\begin{defn}
A map $f:X\to Y$ between compact metric spaces is cell-like provided that each point inverse is cell-like. 
\end{defn}

It follows from the definition of cell-like spaces that a cell-like map is surjective.

\begin{defn}
A homotopy equivalence $f:N\to M$ between closed manifolds $\emph{factors through cell-like maps}$ if there is a space $X$ and cell-like maps $c_1:N\to X$ and $c_2:M\to X$ such that the following diagram
 \[
    \xymatrix{ N \ar[rr]^{f}\ar[dr]_{c_1}& &M\ar[dl]^{c_2}\\
    & X &
    }
    \]
    
\noindent homotopy commutes. We say that $N$ and $M$ are $CE-related$.

\end{defn}

\begin{rem} Note that if $c_1:N\to X$ and $c_2:M\to X$ are cell-like maps, then there exists a homotopy equivalence $f:N\to M$ so that $c_2\circ f\sim c_1$. This follows from \cite{La2} Lemma 2.3.
\end{rem}

We recall the definition of the simple topological structure set of a manifold $M$ and the algebraic surgery exact sequence. 
\begin{defn}

A $\emph{simple topological structure}$ $(N,f)$ on an $n$-dimensional manifold $M$ is an $n$-dimensional manifold $N$ together with a simple homotopy equivalence $f:N\to M$.

\end{defn}

\begin{defn}
The $\emph{simple topological structure set}$, for which we will omit decorations and denote here by $S(M)$, of an $n$-dimensional manifold $M$ is the set of equivalence classes of simple manifold structures on $M$. That is, $(N,f)\sim (N',f')$ if there is a homeomorphism $h$ so that the diagram 
 \[
    \xymatrix{ N \ar[rr]^{h}\ar[dr]_{f}& &N'\ar[dl]^{f'}\\
    & M &
    }
    \]
homotopy commutes.
\end{defn}

Denote by $S^{CE}(M)$ the set of topological structures that factor through cell-like maps. All homotopy equivalences in $S^{CE}(M)$ are simple \cite{Dr1}, Proposition 2.3.
\noindent The structure set of a closed topological $n$-manifold, $n\ge5$ fits into the Sullivan-Wall surgery exact sequence \cite{Wall} 

$$\cdots  L_{n+1}(\mathbb{Z}\pi_{1}(M)) \to S(M) \to [M,G/TOP] \stackrel{\theta}\to
L_{n}(\mathbb{Z}\pi_{1}(M)).$$

In this paper, weconsider the algebraic surgery exact sequence \cite{Dr1}:
$$\xymatrix@C.15in{
\cdots \ar[r] & L_{n+1}(\mathbb{Z}\pi_{1}(M))\ar[r] & S_n(M) \ar[r]^{\eta'}& H_n(M;\mathbb{L})\ar[r]^{A'} \ar[r] & L_{n}(\mathbb{Z}\pi_{1}(M))\ar[r] &\cdots}$$

\noindent where $H_n(M;\mathbb{L})=H^0(M;\mathbb{L})=[M,G/TOP\times\mathbb{Z}]$, $S(M)\subset S_n(M)$. 
\\
\\ \noindent For $M$ closed and connected, there's a split monomorphism \cite{Dr1}
$$\xymatrix {0\ar[r] & S(M)\ar[r]^{i}& S_n(M)\ar[r]& \mathbb{Z}}.$$

\noindent So, $S_n(M)$ differs from $S(M)$ by at most a $\mathbb{Z}$.

\begin{prop}\cite{Wa}Let $E$ be a CW complex with trivial homotopy groups $\pi_i(E)=0$,
$i\ge k$ for some $k$, and $q:X\to Y$ a cell-like map
between compacta. Then $q$ induces a bijection of the homotopy
classes $q^*:[Y,E]\to[X,E]$.
\end{prop}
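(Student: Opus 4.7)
The strategy is to induct on the height of the Postnikov tower of $E$ (with an outer induction on $k$). Since $\pi_i(E)=0$ for $i\ge k$, after replacing $E$ by a homotopy equivalent CW complex we may assume $E$ sits at the top of a finite Postnikov tower
$$E=E_{k-1}\to E_{k-2}\to\cdots\to E_1,$$
in which each map $E_n\to E_{n-1}$ is the homotopy fiber of a $k$-invariant $\kappa_n\colon E_{n-1}\to K(\pi_n(E),n+1)$, with fiber $K(\pi_n(E),n)$. It therefore suffices to prove bijectivity of $q^*$ when the target is an Eilenberg--MacLane space and then bootstrap one Postnikov stage at a time.

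\textbf{Base case.} For any abelian group $\pi$, any $n\ge 1$, and any compact metric space $Z$, there is a natural bijection $[Z,K(\pi,n)]\cong\check H^n(Z;\pi)$, obtained by approximating $Z$ by the nerves of its finite open covers. A cell-like map $q\colon X\to Y$ between compact metric spaces has Čech-acyclic point inverses (cell-like $\Rightarrow$ shape trivial $\Rightarrow$ $\check H^{*}$-trivial), so the Vietoris--Begle theorem yields isomorphisms $q^*\colon\check H^*(Y;\pi)\to\check H^*(X;\pi)$ for every coefficient group $\pi$. Combining the two statements gives the desired bijection $q^*\colon[Y,K(\pi,n)]\to[X,K(\pi,n)]$.

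\textbf{Inductive step.} Assume $q^*$ is bijective whenever the target is an Eilenberg--MacLane space, is $E_{n-1}$, or is $\Omega E_{n-1}$ (the last by the outer induction, since $\Omega E_{n-1}$ has vanishing homotopy above level $n-2$). The classifying fiber sequence $\Omega E_{n-1}\to K(\pi_n,n)\to E_n\to E_{n-1}\overset{\kappa_n}{\to}K(\pi_n,n+1)$ produces, functorially in $Z$, a Puppe exact sequence of pointed sets
$$[Z,\Omega E_{n-1}]\to[Z,K(\pi_n,n)]\to[Z,E_n]\to[Z,E_{n-1}]\overset{(\kappa_n)_*}{\to}[Z,K(\pi_n,n+1)].$$
Naturality of this sequence with respect to $q$ and the five-lemma for pointed sets then upgrade the bijections at the four outer terms into the required bijection $q^*\colon[Y,E_n]\to[X,E_n]$.

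\textbf{Main difficulty.} All of the substance lies in the base case, which rests on two classical facts: first, the representability of $[Z,K(\pi,n)]$ by $\check H^n(Z;\pi)$ for an arbitrary compactum $Z$, proved via nerve approximations; and second, the Vietoris--Begle theorem applied to a cell-like map, whose point inverses are Čech-acyclic. The Postnikov induction itself is purely formal obstruction theory once these two ingredients are in place, so the whole argument reduces to the interaction between cell-likeness and Čech cohomology of the point preimages.
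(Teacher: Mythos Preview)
The paper does not prove this proposition at all; it is quoted from Walsh~\cite{Wa} and used as a black box. So there is no in-paper argument to compare against, and the question is only whether your outline stands on its own.

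Your Postnikov-tower/obstruction-theory route is the standard one and is essentially correct, but two steps need tightening. First, your base case treats only \emph{abelian} $\pi$, via $[Z,K(\pi,n)]\cong\check H^n(Z;\pi)$ and Vietoris--Begle. This leaves the bottom stage $E_1=K(\pi_1(E),1)$ unproved when $\pi_1(E)$ is nonabelian, and in the paper the proposition is applied with $E=P_2(M)$, where $\pi_1$ need not be abelian. The fix is in the same spirit: cell-like compacta have trivial shape, so a cell-like map of compacta is a hereditary shape equivalence, and in particular $q^*\colon[Y,K(\pi,1)]\to[X,K(\pi,1)]$ is a bijection for arbitrary groups $\pi$ (this is exactly the level at which Walsh's argument lives).

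Second, invoking ``the five-lemma for pointed sets'' is not literally valid: exactness of the Puppe sequence at $[Z,E_n]$ is only a statement about orbits, not kernels, and there is no five-lemma for bare pointed sets. What makes the inductive step work is the extra structure of the \emph{principal} fibration $K(\pi_n,n)\to E_n\to E_{n-1}$: the group $[Z,K(\pi_n,n)]$ acts on $[Z,E_n]$ with orbits exactly the fibers over $[Z,E_{n-1}]$, and the isotropy at any point is the image of $[Z,\Omega E_{n-1}]$ (after translating the basepoint). With that in hand one chases surjectivity and injectivity of $q^*$ directly, using the bijections on the four outer terms. Once you make these two points explicit, your argument goes through.
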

Let $q:M\to X$ be a cell-like map and $j:M\to P_2(M)$ be the inclusion map. Then by the above proposition, there is a map $g:X\to P_2(M)$ such that $g\circ q\sim j$. Denote by $i$ the induced map on their mapping cylinders, $i:M_q\to M_j$ where $i|_M=\mathrm{id}|_M$ and $i|_X=g$, and by $i_*:H_*(M_q,M;\mathbb L)\to H_*(P_2(M),M;\mathbb L)$ the induced homomorphism
on the Steenrod \( \mathbb L \)-homology groups \cite{F1}\cite{KS}.
\\
\\
\noindent We state the main results of \cite{Dr1}.
\\
\\ \noindent Let $M$ be a closed $n$-manifold, there's a map
$$\delta :H_{n+1}(P_2(M),M;\mathbb{L}) \cong S_{n+1}( P_2(M),M)\stackrel{\partial}\to
S_n(M)\stackrel{p}\to S(M).
$$
where $p$ is any splitting of $i$. For details on the maps above, refer to \cite{Dr1}, section 2.

\begin{thm} (\cite{Dr1}, Theorems 2.4 and 2.7) Let $M^n$ be a closed topological manifold, $n\geq6$. Let $T_{odd}(H_{n+1}(P_2(M),M;\mathbb{L}))$ be the odd torsion subgroup of $H_{n+1}(P_2(M),M;\mathbb{L})$. \\ Then $S^{CE}(M)=\delta (T_{odd}(H_{n+1}(P_2(M),M;\mathbb{L})))$. In particular, $S^{CE}(M)$ is a subgroup of the odd torsion subgroup of $S(M)$. 
\\ Moreover, if $M$ is simply connected with finite $\pi_2(M)$, then $S^{CE}(M)$ is the odd torsion subgroup of $S(M)$.
\end{thm}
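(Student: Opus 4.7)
The plan is to build a correspondence between $S^{CE}(M)$ and $\delta(T_{odd}(H_{n+1}(P_2(M),M;\mathbb{L})))$ via the mapping cylinder construction on cell-like factorizations. The odd-torsion constraint will come from the chain of equivalences $\mathbb{L}\wedge M(p)\simeq\mathrm{KO}\wedge M(p)$ for odd $p$ together with the vanishing of $\mathrm{KO}\wedge\mathbb{Z}_p$-homology on Eilenberg-MacLane spaces (Theorem 2.1).

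For the inclusion $S^{CE}(M)\subseteq \delta(T_{odd})$, suppose $[f]\in S^{CE}(M)$ is witnessed by cell-like maps $c_1:N\to X$ and $c_2:M\to X$ with $c_2\circ f\simeq c_1$. Since $\pi_k(P_2(M))=0$ for $k\geq 3$, Proposition 2.9 lifts the inclusion $j:M\to P_2(M)$ to a map $g:X\to P_2(M)$ with $g\circ c_2\simeq j$, inducing a map of mapping cylinders $i:M_{c_2}\to M_j\simeq P_2(M)$. The relevant class in $H_{n+1}(M_{c_2},M;\mathbb{L})$, arising from the surgery interpretation of $c_2$ as a degree-$1$ normal map onto $X$, pushes forward under $i_*$ to an element $\alpha\in H_{n+1}(P_2(M),M;\mathbb{L})$ with $\delta(\alpha)=[f]$. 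The main step is then to show $\alpha\in T_{odd}$: for each odd prime $p$, the Universal Coefficient Formula reduces the claim to $\alpha\otimes\mathbb{Z}_p=0$ in $H_{n+1}(P_2(M),M;\mathbb{L}\wedge M(p))$. Via $\mathbb{L}\wedge M(p)\simeq \mathrm{KO}\wedge M(p)$ this becomes vanishing of $\mathrm{KO}\wedge M(p)$-Steenrod homology of $M_{c_2}/M$; applied via the Postnikov tower of $P_2(M)/M$ (whose relative cells lie in dimensions $\geq 4$), Theorem 2.1 forces the image of $\alpha$ to vanish. Rationally $\alpha$ also vanishes because cell-like maps are rational $\mathbb{L}$-homology equivalences, so $\alpha$ must have finite odd order.

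For the reverse inclusion, I would realize each $\alpha\in T_{odd}$ by a cell-like resolution, following the Dranishnikov--Walsh program of \cite{Wa}: $X$ is built as the inverse limit of a sequence of finite-dimensional approximations, produced by successive controlled cell-like decompositions of intermediate spaces, where at each stage Theorem 2.1 is the input guaranteeing that the residual obstruction is killable at every odd prime. The hardest part, and the main obstacle throughout, is controlling convergence of this inverse system and verifying that both $c_2:M\to X$ and the companion map from a suitably surgered $N$ to $X$ remain cell-like in the limit while the resulting homotopy equivalence represents $\delta(\alpha)$. Finally, in the simply connected case with $\pi_2(M)$ finite, the Atiyah--Hirzebruch spectral sequence for $\mathbb{L}$-homology of the pair $(P_2(M),M)$ has finitely generated entries concentrated in a small range; combined with the fact that $L_*(\mathbb{Z})$ has no odd torsion, the algebraic surgery exact sequence identifies $T_{odd}(S(M))$ with $T_{odd}(H_n(M;\mathbb{L}))$, and a diagram chase along the surgery sequence shows that $\delta$ sends $T_{odd}(H_{n+1}(P_2(M),M;\mathbb{L}))$ surjectively onto this subgroup, yielding the claimed equality.
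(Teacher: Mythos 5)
First, note that the paper you are working from does not prove this statement at all: it is Theorem 2.11, quoted verbatim from \cite{Dr1} (Theorems 2.4 and 2.7) under the heading ``We state the main results of \cite{Dr1}.'' So the comparison has to be with the argument in that reference. Your overall architecture --- forward inclusion via the mapping cylinder $M_q$, the lift $g:X\to P_2(M)$ from Proposition 2.9, and the composite $H_{n+1}(M_q,M;\mathbb L)\to H_{n+1}(P_2(M),M;\mathbb L)\stackrel{\delta}\to S(M)$; reverse inclusion via realization of odd torsion classes by cell-like resolutions in the Edwards--Walsh/Dranishnikov style --- does match the shape of the actual proof.

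However, the forward direction as you argue it contains a genuine error: the localization is backwards. To show the class $\alpha$ is \emph{odd} torsion you must kill it rationally and \emph{at the prime $2$}; showing $\alpha\otimes\Z_p=0$ for every odd $p$ (which is what you set out to do) would prove, for a torsion class, that its order is a power of $2$ --- the opposite conclusion. The correct mechanism at $2$ and rationally is that $\mathbb L_{(2)}$ and $\mathbb L_{\Q}$ are generalized Eilenberg--MacLane spectra, and cell-like maps induce isomorphisms on Steenrod homology with Eilenberg--MacLane coefficients, so $H_*(M_q,M;\mathbb L)$ is annihilated after localizing at $2$ and rationally, hence is odd torsion; this is where the constraint comes from, not from Theorem 2.1. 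Relatedly, your appeal to Theorem 2.1 ``via the Postnikov tower of $P_2(M)/M$'' does not work: that theorem is about Eilenberg--MacLane spaces $K(\pi,n)$, not about highly connected complexes ($S^4$ already has nonvanishing $KO\wedge M(p)$-homology), and if your vanishing claim were correct it would force $T_{odd}(H_{n+1}(P_2(M),M;\mathbb L))=0$ and trivialize the theorem. Theorem 2.1 and the equivalence $\mathbb L\wedge M(p)\simeq KO\wedge M(p)$ belong to the \emph{reverse} inclusion, where $\widetilde H_*(K(\Z,3);\mathbb L\wedge M(p))=0$ feeds Dranishnikov's realization theorem (Theorem 3.10 here) to produce a compactum $Y\supset M$ with $\dim_{\Z}(Y-M)\le 3$, which is then converted into the mapping cylinder of a cell-like map. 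Finally, in the ``moreover'' clause the finiteness of $\pi_2(M)$ is needed precisely so that $\widetilde H_*(K(\pi_2(M),2);KO)=0$ (the second clause of Theorem 2.1), making $P_2(M)=K(\pi_2(M),2)$ odd-locally $\mathbb L$-acyclic; your finite-generation remark about the Atiyah--Hirzebruch spectral sequence does not capture this and would not by itself give surjectivity onto the odd torsion of $S(M)$.
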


\section {Main Results}   

In \cite{Dr1} Corollary 2.15 the authors constructed a closed simply connected 6-dimensional manifold $M$ with $H_6(M;\mathbb{L})=\Z_2\oplus\Z_p\oplus\Z$ and showed, using the Browder-Novikov-Wall surgery exact sequence and a bundle theoretic argument, that for any element $[N,f]\in S^{CE}(M)$, $N$ is not homeomorphic to $M$ (We note that in~\cite{Dr1}, in the computation of $H_6(M;\mathbb{L})$, the 
$\mathbb Z$ and $\Z_2$ summands were mistakenly omitted. However, that does not change the final result since $S^{CE}(M)=\Z_p$ regardless). The manifold $M$ was constructed by PL-embedding the Moore complex $P=S^1\cup_pB^2$ in $\mathbb{R}^6$, then suspending to obtain $P'=S^2\cup_p B^3$ and taking $M=\partial W$, where $W$ is the regular neighborhood of $P'=S^2\cup_p B^3$ in $\mathbb{R}^7$.
  We state this result for completeness.
\begin{thm}[\cite{Dr1} Corollary 2.15] There are closed non-homeomorphic 6-dimensional manifolds $M$ and $N$ which are CE-related.
\end{thm}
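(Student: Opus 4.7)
The plan is to construct $M$ explicitly as the boundary of the regular neighborhood of a suspended Moore complex, compute $H_n(M;\mathbb{L})$ to produce odd torsion in the surgery piece, apply the characterization of $S^{CE}(M)$ given by Theorem 2.4/2.7, and finally distinguish $M$ from any representative $N$ of a nontrivial class via normal invariants.

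First, I would fix an odd prime $p$ and take the Moore complex $P=S^{1}\cup_{p}B^{2}$. PL-embed $P$ in $\mathbb{R}^{6}$, suspend to get $P'=S^{2}\cup_{p}B^{3}\subset\mathbb{R}^{7}$, and let $W$ be a regular neighborhood; set $M=\partial W$. Since $P'$ is simply connected, $W$ is a simply connected $7$-manifold with boundary, and by general position/transversality $M$ is a closed simply connected $6$-manifold. A Mayer--Vietoris or direct computation from the cofiber sequence shows that $\pi_{2}(M)$ contains the $\mathbb{Z}_{p}$-torsion coming from $P'$.

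Second, I would compute $H_{n+1}(P_{2}(M),M;\mathbb{L})$ and $H_{n}(M;\mathbb{L})$ (with $n=6$). Using the Postnikov tower one reduces this to computations in $H_{*}(M;\mathbb{L})$. Because $p$ is odd, the chain of spectrum equivalences
\[
\mathrm{KO}\wedge M(p)\sim\widetilde{\mathrm{KO}}[\tfrac{1}{2}]\wedge M(p)\sim\mathbb{L}[\tfrac{1}{2}]\wedge M(p)\sim\mathbb{L}\wedge M(p)
\]
together with Theorem 1.1 (vanishing of $\mathrm{KO}$-homology of Eilenberg--MacLane spaces) kills the contribution of the Postnikov sections and leaves exactly a $\mathbb{Z}_{p}$-summand in the relative group, along with the expected $\mathbb{Z}\oplus\mathbb{Z}_{2}$ from low degrees, yielding $H_{6}(M;\mathbb{L})=\mathbb{Z}_{2}\oplus\mathbb{Z}_{p}\oplus\mathbb{Z}$. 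The Universal Coefficient sequence with coefficients in $M(p)$ is used to extract the $\mathbb{Z}_{p}$ summand.

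Third, I would invoke Theorem 2.4/2.7: since $M$ is simply connected with finite $\pi_{2}(M)$, the subgroup $S^{CE}(M)$ equals $\delta(T_{\mathrm{odd}}(H_{7}(P_{2}(M),M;\mathbb{L})))$ and in fact coincides with the odd torsion of $S(M)$. The surgery exact sequence with $\pi_{1}(M)=1$ and the odd-$p$ nontriviality of the relative $\mathbb{L}$-homology then guarantee a nonzero $\mathbb{Z}_{p}$ summand in $S(M)$, and hence in $S^{CE}(M)$. Pick $[N,f]\in S^{CE}(M)$ representing a generator.

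The main obstacle, as usual, is the last step: showing that the structure $(N,f)$ actually has $N\not\cong M$, rather than just being distinguished from the identity structure by a self-equivalence. Here one must run the Browder--Novikov--Wall argument: the normal invariant $\eta(f)\in[M,G/TOP]$ is nonzero mod odd torsion, so its image under the natural transformation to a rational/characteristic-class invariant (Pontryagin classes of the stable normal bundle, together with the explicit cohomology ring of $M$ coming from the regular-neighborhood construction) separates $N$ from $M$. Concretely, one shows that any homeomorphism $N\to M$ would force the total $L$-class to be preserved, contradicting the nontriviality of $\eta(f)$ on the low-dimensional generators of $H^{*}(M;\mathbb{Q})\oplus H^{*}(M;\mathbb{Z}_{p})$ produced by the suspended Moore complex. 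This bundle-theoretic obstruction is the crux; everything before it is formal manipulation of the exact sequence and $\mathbb{L}$-homology computations.
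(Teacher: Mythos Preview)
Your construction of $M$ and the overall strategy match the paper's approach exactly: embed the Moore complex, suspend, take the boundary of a regular neighborhood, compute the $\mathbb{L}$-homology to find a $\mathbb{Z}_p$ in $S^{CE}(M)$ via Theorem~2.10, and then distinguish $N$ from $M$ by a bundle argument. The homology computation is done in the paper more directly via Lefschetz duality for the pair $(W,M)$ and the Atiyah--Hirzebruch spectral sequence rather than by invoking the Eilenberg--MacLane vanishing results, but your route gets to the same place.

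The real gap is in your final paragraph. Your proposed obstruction---rational Pontryagin classes and the total $L$-class---cannot see a $p$-torsion normal invariant, and your phrase ``nonzero mod odd torsion'' is backwards: $\eta(f)$ \emph{is} odd torsion. The paper's argument (carried out in the proof of Corollary~3.4) is different and is what makes the step go through. First, $M$ is stably parallelizable because it sits as a codimension-one submanifold of euclidean space; hence $\nu_M:M\to BTOP$ is null. Second, one chooses $p$ coprime to the orders of $\pi_i(BG)$ for $i\le n+1$ (for $n=6$ this means $p\ge 5$); then the fiber inclusion $i:G/TOP\to BTOP$ becomes an $(n+1)$-equivalence after inverting only primes $\ne p$, so $i_*:[M,G/TOP]\to[M,BTOP]$ is injective on $p$-torsion. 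Since $\nu_N\simeq i\circ\eta(f)$ with $\eta(f)$ a nonzero $p$-torsion class, $\nu_N$ is not null, so $N$ is not stably parallelizable and hence not homeomorphic to $M$. You should replace the $L$-class argument with this one and add the coprimality condition on $p$.
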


In the next corollary, we generalize the authors' construction to any $n$-dimensional, $n\geq6$, manifolds $M$ and $N$. First, we need the following fact.
\begin{prop}(\cite{Ra}, Proposition 20.3)
For $n\geq4$ the structure set $S(M)$ of a simply connected $n$-dimensional topological manifold $M$ is such that $S(M)=S_{n+1}(M)=\mathrm{ker}(A:H_n(M;\mathbb{L}_\cdot)\to L_n(\mathbb{Z}))$ if $n\equiv0\ \mathrm{mod}\ 2$, and $S(M)=S_{n+1}(M)=H_n(M;\mathbb{L}_\cdot)$ if $n\equiv1\ \mathrm{mod}\ 2$.
\end{prop}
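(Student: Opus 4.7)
The plan is to deduce both identifications directly from the algebraic surgery exact sequence. Since $M$ is simply connected, $L_*(\mathbb{Z}\pi_1(M)) = L_*(\mathbb{Z})$, and the relevant portion of the sequence specializes to
$$H_{n+1}(M;\mathbb{L}_\cdot) \xrightarrow{A} L_{n+1}(\mathbb{Z}) \xrightarrow{\partial} S_{n+1}(M) \xrightarrow{\eta} H_n(M;\mathbb{L}_\cdot) \xrightarrow{A} L_n(\mathbb{Z}).$$

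The key observation I would use is that for simply connected $M$, the assembly map $A$ agrees, under the identification $H_*(\mathrm{pt};\mathbb{L}_\cdot) = L_*(\mathbb{Z})$, with the map on $\mathbb{L}_\cdot$-homology induced by the projection $M \to \mathrm{pt}$. Any choice of basepoint $x_0 \in M$ produces a section $\mathrm{pt} \hookrightarrow M$ of this projection, so $A$ is split surjective in every positive degree.

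From this, the connecting map $\partial: L_{n+1}(\mathbb{Z}) \to S_{n+1}(M)$ vanishes by exactness, hence $\eta$ is injective with image $\ker\bigl(A : H_n(M;\mathbb{L}_\cdot) \to L_n(\mathbb{Z})\bigr)$. Splitting by the parity of $n$ and using the four-fold periodicity of $L_*(\mathbb{Z})$, with $L_1(\mathbb{Z})=L_3(\mathbb{Z})=0$: when $n$ is odd, $L_n(\mathbb{Z})=0$ and so $\ker(A) = H_n(M;\mathbb{L}_\cdot)$; when $n$ is even, $\ker(A)$ is exactly the stated kernel.

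Finally, the equality $S(M) = S_{n+1}(M)$ of the geometric topological structure set with the algebraic structure group in degree $n+1$ is Ranicki's main identification theorem, valid for closed topological $n$-manifolds with $n \geq 4$ (the borderline case $n=4$ relying on Freedman's topological surgery). Granting this identification, the proposition becomes a corollary of exactness together with the basepoint splitting of $A$. The only real subtlety — and what I would flag as the main obstacle in writing a genuinely self-contained proof — is justifying that the basepoint indeed splits the assembly map at the spectrum level rather than merely on homotopy groups, so that the splitting applies uniformly in every degree of the exact sequence; in Ranicki's framework this is a consequence of the naturality of the assembly construction, so no substantive new work beyond what is cited is required.
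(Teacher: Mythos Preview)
The paper does not supply its own proof of this proposition: it is quoted verbatim from Ranicki's book (\cite{Ra}, Proposition~20.3) and used as a black box. Your argument is correct and is in fact the standard derivation one finds in Ranicki's treatment: the simply connected assembly map is the map on $\mathbb{L}_\cdot$-homology induced by $M\to\mathrm{pt}$, the basepoint splits it, exactness kills $\partial$, and the parity distinction falls out of $L_{\mathrm{odd}}(\mathbb{Z})=0$. The identification $S(M)=S_{n+1}(M)$ is indeed Ranicki's total surgery obstruction theorem, so you are right to cite it rather than reprove it. Your flagged subtlety about the splitting holding at the spectrum level is not a genuine obstacle here: naturality of generalized homology in the space variable already guarantees that the basepoint inclusion splits $H_*(M;\mathbb{L}_\cdot)\to H_*(\mathrm{pt};\mathbb{L}_\cdot)$ degreewise, which is all that is needed.
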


\begin{rem}Note that in this paper we use $H_n(M;\mathbb{L})$ and $S_n(M)$ which differ from $H_n(M;\mathbb{L}_\cdot)$ and $S_{n+1}(M)$ by at most a $\mathbb{Z}$ (See preliminaries and \cite{Dr1}, Proposition 4.5). Also, recall that $L_n(\mathbb{Z})=\mathbb{Z}$ if $n=4k$, $L_n(\mathbb{Z})=\mathbb{Z}_2$ if $n=4k+2$, and $L_n(\mathbb{Z})=0$ for $n$ odd.
\end{rem}

\begin{cor}
For any $n\geq6$, for any $m$, there are $m$ closed non-homeomorphic simply connected $n$-manifolds $M$ and $N$ which are CE-related.
\end{cor}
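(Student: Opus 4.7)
The strategy is to generalize the construction of \cite{Dr1} in two directions simultaneously: to arbitrary dimension $n\ge 6$, and to produce $m$ pairwise non-homeomorphic manifolds rather than a single pair. Concretely, I would build a simply connected closed $n$-manifold $M$ whose cell-like structure set $S^{CE}(M)$ contains at least $m$ classes with pairwise non-homeomorphic underlying manifolds, and then invoke Theorem~1.2 to produce a common target $X$.

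\textbf{Construction and size of $S^{CE}(M)$.} Fix an odd prime $p>m$. Following the recipe of \cite{Dr1} Corollary~2.15, PL-embed (after iterated suspension and, for $n>6$, a higher-codimension unknotting) the Moore complex $P'=S^r\cup_pB^{r+1}$ into $\mathbb{R}^{n+1}$ for an appropriate $r$ with $2\le r\le n-3$, take a regular neighborhood $W$, and set $M=\partial W$. The choice $r\ge 2$ makes $P'$ simply connected, and the resulting codimension $\ge 3$ ensures $M=\partial W$ is a simply connected closed topological $n$-manifold. A computation using Alexander/Lefschetz duality in $S^{n+1}$ together with the Atiyah-Hirzebruch spectral sequence in $\mathbb{L}$-homology (paralleling \cite{Dr1}) places a $\mathbb{Z}_p$ summand in $H_n(M;\mathbb{L})$; combining Proposition~3.2 with Theorem~2.7 then shows $\mathbb{Z}_p\subset S^{CE}(M)$. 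For each non-zero class $\alpha_i\in\mathbb{Z}_p$ ($i=1,\ldots,p-1$), let $[N_i,f_i]\in S^{CE}(M)$ be a representative.

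\textbf{Non-homeomorphism and conclusion.} The bundle-theoretic argument in \cite{Dr1} Corollary~2.15 generalizes to show each $N_i$ is not homeomorphic to $M$. Separating the $N_i$ pairwise amounts to arranging at least $m$ orbits for the action of the self-homotopy-equivalence group $\mathrm{hAut}(M)$ on $\mathbb{Z}_p\setminus\{0\}$; for the highly connected manifold $M$ at hand this group is finite, so taking $p$ with $p-1>m\cdot|\mathrm{hAut}(M)|$ yields at least $m$ pairwise non-homeomorphic manifolds among the $N_i$. Since the chosen $m$ classes all lie in the finitely generated cyclic subgroup $\mathbb{Z}_p\subset S^{CE}(M)$, Theorem~1.2 supplies a single space $X$ together with cell-like maps $q\colon M\to X$ and $q_i\colon N_i\to X$ satisfying $q\circ q_i\simeq f_i$, completing the argument. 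The main obstacle is this non-homeomorphism step: controlling $\mathrm{hAut}(M)$ and verifying that distinct non-zero classes in $\mathbb{Z}_p$ indeed produce non-homeomorphic manifolds. If the direct bound on $\mathrm{hAut}(M)$ proves delicate, a cleaner fallback is to replace $P'$ by the wedge $\bigvee_{i=1}^k(S^r\cup_{p_i}B^{r+1})$ of Moore complexes for distinct odd primes $p_i$, so that $S^{CE}(M)\supset\bigoplus_{i=1}^k\mathbb{Z}_{p_i}$ and the $p_i$-local primary components of the normal invariant serve as pairwise distinguishing characteristic classes between the resulting $N$'s.
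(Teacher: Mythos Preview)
Your construction of $M$ and the appeal to the bundle-theoretic argument for $N_i\not\cong M$ follow the paper closely. The gap is in the \emph{pairwise} non-homeomorphism step. Your plan is to bound $|\mathrm{hAut}(M)|$ and then choose $p$ with $p-1>m\cdot|\mathrm{hAut}(M)|$; but $M$ itself depends on $p$, so this is circular. Worse, since $H_3(M)\cong\mathbb{Z}_p$, the obvious upper bound on the image of $\mathrm{hAut}(M)\to\mathrm{Aut}(H_3(M))\cong(\mathbb{Z}/p)^\times$ is $p-1$, and there is no evident reason the action on the $\mathbb{Z}_p$-summand of $S^{CE}(M)$ could not be transitive on nonzero elements, which would leave you with only two homeomorphism types. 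Your fallback with a wedge of Moore spaces for distinct primes is more promising, but it abandons the uniform construction and requires redoing the $\mathbb{L}$-homology computation.

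The paper avoids this entirely, and the trick is worth absorbing. First, one takes $p\ge m$ \emph{and} coprime to the (finite) orders of $\pi_i(BG)$ for $i\le n+1$ (a condition you omit, but which is needed to make $i_*:[M,G/TOP]\to[M,BTOP]$ injective on $p$-torsion). The bundle argument then shows that every nontrivial $[N,f]\in S^{CE}(M)$ has $\nu_N$ nontrivial, while $\nu_M$ is trivial since $M\subset\mathbb{R}^{n+1}$. Now apply this to $[M,\psi]$ for any self-equivalence $\psi$ of $M$: since $M$ is obviously homeomorphic to $M$, the class $[M,\psi]$ must be trivial, so $\psi$ is homotopic to a homeomorphism. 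Hence $[N,f]=[N,\psi\circ f]$ always, i.e.\ $\widetilde S^{CE}(M)=S^{CE}(M)$, and distinct classes automatically carry non-homeomorphic manifolds. No control of $\mathrm{hAut}(M)$ is needed.

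Finally, invoking Theorem~1.2 to produce a common $X$ is not part of this corollary: ``CE-related'' is a pairwise relation, and the existence of a single common target is the separate (later) Theorem~1.1.
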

\begin{proof}
  Let $p$ be a prime number such that $p\geq m$ and $p$ does not divide $\mathrm{order}(x_i)$ for every $x_i\in \pi_i(BG), i\leq n+1$. This is possible since the groups $\pi_*(BG)$ are finite. Let $n\neq 7, 8$. By general position, the Moore complex
$P=S^1\cup_pB^2$ can be PL-embedded in $\mathbb{R}^6$. Suspending $n-5$ times 
embeds $P'=S^{n-4}\cup_{p}B^{n-3}$ in $\mathbb{R}^{n+1}$. Let $W$ be a regular neighborhood 
of $P'$ in $\mathbb{R}^{n+1}$ and let 
$\partial W = M$. The manifold $M$ is stably parallelizable because it is a closed codimension one submanifold of euclidean space $\mathbb{R}^{n+1}$ and it is simply connected.
By Lefschetz duality, $H_{n-4}(W,M)=H^5(W)=H^5(P')=0$ and
$H_{n-3}(W,M)=H^4(W)=H^4(P')=0$. Hence, the exact sequence of the pair
$(W,M)$ implies that $H_{n-4}(M)=\mathbb{Z}_p$. Similarly, we can find that $H_3(M)=\Z_p$ and $H_k(M)=0$ for $k\neq 0, 3, n-4, n$.
By the Atiyah-Hirzebruch spectral sequence $H_{n-4}(M;\mathbb{L})\cong L_n(\Z)\oplus H_3(M,L_{n-7}(\Z))\oplus \mathbb{Z}_p\oplus\mathbb Z $. Therefore, $H_{n}(M;\mathbb{L})\cong L_n(\Z)\oplus H_3(M,L_{n-7}(\Z))\oplus \mathbb{Z}_p\oplus\mathbb Z$. For $n=7$, from the exact sequence of pairs $(W,M)$, we get $H_{3}(M)=\mathbb{Z}_p\oplus\Z_p$ and $H_k(M)=0$ for $n\neq 0,3,7$. By the Atiyah-Hirzebruch spectral sequence $H_{3}(M;\mathbb{L})\cong \mathbb{Z}_p\oplus\Z_p\oplus\mathbb Z $. Hence, $H_{7}(M;\mathbb{L})\cong\Z_p\oplus\Z_p\oplus \mathbb{Z}$. For $n=8$, consider the PL-embedding of the Moore complex $P=S^1\cup_pB^2$ in $\mathbb{R}^7$ and suspend twice to embed $P'=S^{3}\cup_{p}B^{4}$ in $\mathbb{R}^{9}$. As before, let $M=\partial W$, where $W$ is the regular neighborhood of $P'$ in $\mathbb{R}^{9}$. By a similar argument, we get $H_3(M)=H_4(M)=\Z_p$, $H_k(M)=0$ for $k\neq 0,3,4,8$ and $H_{4}(M;\mathbb{L})\cong H_{8}(M;\mathbb{L}) \cong\mathbb Z\oplus \mathbb{Z}_p\oplus\mathbb Z $.
Hence, by Proposition 3.2, Remark 3.3, and Theorem 2.11, $S^{CE}(M)=\Z_p\oplus\Z_p$ for $n\equiv 3\ \mathrm{mod}\ 4$ and $S^{CE}(M)=\mathbb{Z}_p$ otherwise.

We get the following commutative diagram from the  Sullivan-Wall and the Quinn-Ranicki exact sequences. 
$$
\xymatrix{
L_{n+1}(\mathbb{Z})\ar[r]\ar[d]^{=} &S(M) \ar[r]^(.4){\eta}\ar[d]^{\subset}& [M;\,G/TOP]\ar[d]\ar[r] &L_n(\mathbb{Z})\ar[d]^{=}\\
L_{n+1}(\mathbb{Z})\ar[r]& S_n(M) \ar[r]^(.4){\eta'} & H_n(M,\mathbb{L}) \ar[r] &L_n(\mathbb{Z})
}
$$

Choose a nontrivial $p$-torsion element $\beta=[N,f]\in S^{CE}(M)$.
We next show that $N$ is not homeomorphic to $M$ by showing that $N$ has a nontrivial topological stable normal bundle.
Let $[\gamma]=\eta(\beta)$.
The class $[\gamma]$ represents the difference between topological stable normal bundles on $M$ and $N$ which are defined by two lifts
$\nu_M:M\to BTOP$ and $\sigma:M\to BTOP$  of the Spivak map $M\to BG$ with respect to the fibration $j:BTOP\to BG$. 
Here $\nu_M$ denotes a classifying map for the topological stable normal bundle on $M$.
Note that $\nu_N=\sigma\circ f$. Thus, the lifts $\nu_M$ and $\sigma$ are not fiberwise homotopic.
We need to show that $\nu_M$ and $\sigma$ are not homotopic in $BTOP$.

Since the stable normal bundle of $M$ is trivial, the map $\nu_M: M\to BTOP$ is nullhomotopic.
Note that the map $\sigma$ is homotopic to $i\circ\gamma$ where
$i:G/TOP\to BTOP$ is the inclusion of the fiber into the total space of the fibration $j$. The homotopy exact sequence of the fibration $j$ implies that after inverting by finitely many primes $p_1, p_2, ..., p_k$, where $p_i\neq p$, we get that the inclusion $i$  is an $(n+1)$-equivalence. In particular, $i_*: [M,G/TOP]\to [M,BTOP]$ is a bijection. Therefore, the map $i\circ\gamma$ is not nullhomotopic.

 Thus, $\nu_N$ is not nullhomotopic, the topological stable normal bundle of $N$ is nontrivial and, hence, $N$ is not homeomorphic to $M$.
\end{proof}

\begin{defn}
We define the reduced group $\widetilde S^{CE}(M)$ of $S^{CE}(M)$ to be the quotient group $S^{CE}(M)/\langle[N,f] -[N,\psi\circ f]\rangle$ where $f:N\to M$ and  $\psi : M\to M$ are orientation preserving homotopy equivalences that factor through cell-like maps.
\end{defn}
\noindent The above definition makes sense since $h_1\circ h_2$ factors through cell-like maps whenever $h_1$ and $h_2$ factor through cell-like maps \cite{Dr}, Corollary 2.11. Note that two different classes in $\widetilde S^{CE}(M)$ contain nonhomemorphic manifolds, for let $[N,f]$ and $[N,g]$ be elements of $S^{CE}(M)$ then $g=\psi\circ f$, where $\psi=gf^{-1}, \psi:M\to M$.

\begin{cor} Let $M$ be as in Corollary 3.4. Any two classes [N,f] and [P,g] in $S^{CE}(M)$ are such that $N$ and $P$ are non-homeomorphic.
\end{cor}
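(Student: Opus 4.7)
The plan is to proceed by contradiction: assume $[N,f]$ and $[P,g]$ are distinct classes in $S^{CE}(M)$ and that a homeomorphism $h\colon N\to P$ exists, and derive $[N,f]=[P,g]$. Choose a homotopy inverse $f^{-1}\colon M\to N$ and set $\psi:=g\circ h\circ f^{-1}\colon M\to M$. Then $\psi\circ f\simeq g\circ h$, so
\[
[N,\psi\circ f]=[N,g\circ h]=[P,g]\quad\text{in }S(M),
\]
the last equality realized by the homeomorphism $h$ itself.

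Next I verify that $\psi$ factors through cell-like maps. By hypothesis $g$ does; the homeomorphism $h$ trivially does; and if $c_1\colon N\to X$, $c_2\colon M\to X$ are cell-like maps with $c_2\circ f\simeq c_1$, then $c_1\circ f^{-1}\simeq c_2$ exhibits $f^{-1}$ as CE-factorable with the roles of $c_1,c_2$ swapped. By the fact that CE-factorable maps are closed under composition (\cite{Dr}, Corollary~2.11, quoted before Corollary~3.6), $\psi$ is an orientation-preserving CE-factorable self-equivalence of $M$. By the definition of $\widetilde S^{CE}(M)$, the difference $[N,f]-[N,\psi\circ f]=[N,f]-[P,g]$ lies in the relations subgroup, hence $[N,f]=[P,g]$ in $\widetilde S^{CE}(M)$.

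To conclude the contradiction in $S^{CE}(M)$ itself, I must prove that for the specific $M$ of Corollary~3.4 the canonical surjection $S^{CE}(M)\twoheadrightarrow \widetilde S^{CE}(M)$ is an isomorphism, i.e.\ the action of orientation-preserving CE-factorable self-equivalences on $S^{CE}(M)$ is trivial. A first reduction: any such self-equivalence $\psi$ produces $[M,\psi]\in S^{CE}(M)$ with domain $M$, so by Corollary~3.4 applied to this class (its domain is homeomorphic to $M$) one forces $[M,\psi]=0$, meaning $\psi$ is homotopic to an honest homeomorphism. Under the identification of Theorem~2.11, the remaining action is that of $\pi_0(\operatorname{Homeo}^{+}(M))$ on $T_{odd}H_{n+1}(P_2(M),M;\mathbb{L})\cong \mathbb{Z}_p$.

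The main obstacle is the last step: showing that orientation-preserving self-homeomorphisms of $M$ act trivially on this $\mathbb{Z}_p$-summand. The approach is to trace the action through the Atiyah-Hirzebruch spectral sequence back to the action of $\psi_*$ on the $\mathbb{Z}_p$-summand of $H_{n-4}(M;\mathbb{Z})$, and then to exploit the explicit construction of $M$ as the boundary of a regular neighborhood of the suspended Moore complex $S^{n-4}\cup_p B^{n-3}\subset \mathbb{R}^{n+1}$. The key ingredients are Poincaré duality for the torsion pairing between $H_{n-4}(M;\mathbb{Z})$ and $H_3(M;\mathbb{Z})$, the orientation-preservation constraint (which forces the product of the two scalar actions to be $1$ in $(\mathbb{Z}_p)^{\ast}$), and the specific rigidity of the regular-neighborhood construction, which pins down each scalar individually to be $1$. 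Once this triviality is established the contradiction is immediate and the corollary follows.
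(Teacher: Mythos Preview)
Your argument tracks the paper's proof almost verbatim through the reduction: construct $\psi=g\circ h\circ f^{-1}$, observe it is CE-factorable, note that $[M,\psi]\in S^{CE}(M)$ has domain $M$ and hence (by Corollary~3.4) must be trivial, so $\psi$ is homotopic to a homeomorphism. At this point the paper simply writes ``Therefore, $[N,f]=[N,\psi\circ f]$'' and concludes $S^{CE}(M)=\widetilde S^{CE}(M)$; it offers no justification for that implication. You are right to flag that knowing $\psi\simeq h$ for some homeomorphism $h$ only reduces the question to whether the induced automorphism $h_*$ of $S^{CE}(M)$ is the identity --- a step the paper does not address.

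The gap, however, is also present in your proposal. Your Poincar\'e-duality observation yields only the relation $uv=1$ in $(\Z_p)^{\times}$ between the scalars by which $h_*$ acts on $H_3(M)\cong\Z_p$ and $H_{n-4}(M)\cong\Z_p$; it does not force $u=v=1$. The phrase ``the specific rigidity of the regular-neighborhood construction, which pins down each scalar individually to be $1$'' is a label for what must be shown, not an argument. Nothing you have written excludes an orientation-preserving self-homeomorphism of $M=\partial W$ that acts on $H_{n-4}(M)$ by a nontrivial unit $u$; and if such an $h$ exists, then since every homeomorphism is trivially CE-factorable, the relation $[N,f]-[N,h\circ f]=(1-u)[N,f]$ already lies in the kernel, and $(1-u)$ being a unit in $\Z_p$ would force $\widetilde S^{CE}(M)=0$ --- the opposite of what you want. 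So the proof is incomplete precisely at the point you yourself single out as ``the main obstacle'': you have identified it correctly but have not resolved it, and neither has the paper.
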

\begin{proof}
From Corollary 3.4, we have $S^{CE}(M)=\Z_p\oplus\Z_p$ for $n\equiv 3\ \mathrm{mod}\ 4$ and $S^{CE}(M)=\mathbb{Z}_p$ otherwise and nontrivial classes of  $S^{CE}(M)$ are classes of manifolds not homeomorphic to $M$. Hence, for any self homotopy equivalence $\psi$ of $M$, $[M,\psi]=[M,id]$ and therefore $\psi$ is homotopic to a homeomorphism. Therefore, $[N,f]=[N,\psi\circ f]$ and it follows that $S^{CE}(M) = \widetilde S^{CE}(M)$.
\end{proof}

Next, we generalize a result due to Dranishnikov (\cite{Dr2}, Theorem 7.2) which we need for the proof of Proposition 4.1. Dranishnikov showed that, under some assumptions, for a compact polyhedral pair $(P,L)$ and a non-zero element $\alpha\in\widetilde{H}_*(P,L)$, there is a compact metric space $Y\supset L$ and a map $f:(Y,L)\to (P,L)$ such that $\alpha\in \mathrm{Im}(f_*)$.

\begin{defn}
The cohomological dimension of a compactum $X$ with coefficients in the group $G$, denoted $\mathrm{dim}_GX$, is the largest integer $n$ such that $H^n(X,A;G)\neq0$ for some closed subset $A$ of $X$.
\end{defn}
\smallskip

\begin{defn}
A map $f:(X,L)\to (Y,L)$ is called strict if $f(X-L)=Y-L$ and $f|_L=\mathrm{id}_L$.
\end{defn}
\smallskip

\begin{thm}\cite{Dr2}(Theorem 7.2)Let $H_*$ be a generalized homology theory. Suppose that $\widetilde H_*(K(G,n))=0$. Then for any finite polyhedron pair $(K,L)$ and any element $\alpha\in H_*(K,L)$ there is a compactum $Y\supset L$ and a strict map $f:(Y,L)\to (K,L)$ such that
\\(a) $\mathrm{dim}_G(Y-L)\leq n$
\\(b) $\alpha\in \mathrm{Im}(f_*)$
\end{thm}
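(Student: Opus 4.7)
The plan is to construct $Y$ as an inverse limit of a tower of Edwards--Walsh style strict resolutions of $(K,L)$. Starting with $Y_0:=K$, I would build a sequence
$$K=Y_0\xleftarrow{p_1}Y_1\xleftarrow{p_2}Y_2\xleftarrow{p_3}\cdots$$
of finite polyhedra together with strict PL maps $p_i$ (identity on $L$), and set $Y:=\varprojlim Y_i$, with $f:Y\to K$ induced by the compositions. Each $p_i$ is constructed simplex-by-simplex: for every simplex $\sigma$ of dimension greater than $n$ in $Y_{i-1}$ not contained in $L$, the interior of $\sigma$ is replaced by an Edwards--Walsh piece $EW(\sigma,n)$ --- a finite polyhedron with $\partial EW(\sigma,n)=\partial\sigma$, equipped with a PL map to $\sigma$ that is the identity on $\partial\sigma$, satisfying $\dim_G EW(\sigma,n)\leq n$, and whose inclusion has homotopy cofibre a wedge of (suspensions of) truncations of $K(G,n)$. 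These local replacements are arranged coherently across faces so that they glue into a global strict PL map $p_i:Y_i\to Y_{i-1}$.

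The hypothesis $\widetilde H_*(K(G,n))=0$ enters to force each $(p_i)_*:H_*(Y_i,L)\to H_*(Y_{i-1},L)$ to be surjective (in fact an isomorphism in the relevant degrees), because the cofibres of the local replacements are wedges of $H_*$-acyclic spaces. Iterative lifting then produces a compatible system $\alpha_i\in H_*(Y_i,L)$ with $\alpha_0=\alpha$, and the Milnor union axiom of Steenrod homology identifies $H_*(Y,L)$ with $\varprojlim H_*(Y_i,L)$, up to a $\varprojlim^1$ term that vanishes since the tower consists of surjections. This yields a class $\widetilde\alpha\in H_*(Y,L)$ with $f_*(\widetilde\alpha)=\alpha$, giving (b). For (a), each $EW(\sigma,n)$ has $\dim_G\leq n$ by construction, and the standard inverse-limit estimate for cohomological dimension applied to $Y-L=\varprojlim(Y_i-L)$ yields $\dim_G(Y-L)\leq n$.

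The main obstacle is the coherent construction of the Edwards--Walsh pieces: they must patch across common faces to define a globally strict PL map, they must leave $L$ pointwise fixed, and the $G$-dimension bound must be preserved inductively even for simplices of arbitrarily high dimension, which forces the attaching data to be chosen with care. A secondary technical point is the vanishing of $\varprojlim^1$ in Steenrod homology, which requires upgrading ``$H_*$-surjection for the class $\alpha$'' to ``$H_*$-surjection in every relevant degree and on every intermediate lift'' --- and this is precisely where the full strength of $\widetilde H_*(K(G,n))=0$ enters, rather than a weaker hypothesis tailored to the single class $\alpha$.
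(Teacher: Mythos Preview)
The paper does not give its own proof of this statement: it is quoted as Theorem~7.2 of \cite{Dr2} and used as a black box, the paper's only contribution at this point being the immediate extension to finitely many classes $\alpha_1,\dots,\alpha_k$ in the subsequent lemma (by taking the wedge over $L$ of the compacta produced by the cited theorem). So there is no in-paper argument to compare your proposal against.

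That said, your outline is the right one and matches the strategy in \cite{Dr2}: build $Y$ as an inverse limit of Edwards--Walsh type modifications of $(K,L)$ rel~$L$, use the hypothesis $\widetilde H_*(K(G,n))=0$ to see that each bonding map is an $H_*$-equivalence on the pair (the relative cofibres being assembled from copies of $K(G,n)$), and then invoke the Milnor $\varprojlim{}^1$ sequence for Steenrod homology to lift $\alpha$. Two small technical corrections are worth flagging. First, the intermediate stages $Y_i$ are not finite polyhedra: an Edwards--Walsh replacement of a simplex $\sigma^{m}$, $m>n$, already attaches infinitely many cells (one is building a model of $K(G,n)$ over $\partial\sigma$), so the $Y_i$ are infinite CW complexes and one must be a bit careful when passing to the compact inverse limit and when invoking the Steenrod axioms. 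Second, the existence and coherence of Edwards--Walsh resolutions is not automatic for an arbitrary abelian group $G$; in \cite{Dr2} this is handled by the general machinery developed earlier in that paper, and your sketch implicitly assumes that machinery. Neither point is a genuine gap in the strategy, but both are places where ``simplex-by-simplex replacement by a finite piece'' is too optimistic a description of what actually happens.
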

\smallskip
\begin{lem}Let $H_*$ a generalized homology theory. Suppose that $\widetilde H_*(K(G,n))=0$. Then for any finite polyhedron pair $(K,L)$ and $\alpha_1,...,\alpha_k\in H_*(K,L)$ there is a compactum $Y\supset L$ and a strict map $f:(Y,L)\to (K,L)$ such that
\\(a) $\mathrm{dim}_G(Y-L)\leq n$
\\(b) $\alpha_1,...,\alpha_k\in \mathrm{Im}(f_*)$
\end{lem}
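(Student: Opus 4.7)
The plan is to apply Theorem 3.9 separately to each of the classes $\alpha_1,\dots,\alpha_k$ and then amalgamate the resulting compacta along their common subspace $L$.

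For each $i=1,\dots,k$, Theorem 3.9 furnishes a compactum $Y_i\supset L$ and a strict map $f_i\colon (Y_i,L)\to(K,L)$ with $\dim_G(Y_i-L)\le n$ and $\alpha_i\in\mathrm{Im}((f_i)_*)$. I then form the pushout $Y=Y_1\cup_L Y_2\cup_L\cdots\cup_L Y_k$, i.e.\ the quotient of the disjoint union $\bigsqcup Y_i$ by the equivalence relation that identifies the copies of each point of $L$. Since each $Y_i$ is a compact metric space and $L$ is closed in each, the pushout is again a compact metric space, and $L$ embeds as a closed subset in $Y$. Define $f\colon Y\to K$ by $f|_{Y_i}=f_i$; this is well defined because all the $f_i$ restrict to $\mathrm{id}_L$ on $L$, and it is strict: $f|_L=\mathrm{id}_L$ and $f(Y-L)=\bigcup_i f_i(Y_i-L)=K-L$.

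To check condition (a), observe that in $Y$ each $Y_i$ is closed, so each $Y_i-L$ is closed in $Y-L$; since these subsets cover $Y-L$ and any two intersect only in $L$, each $Y_i-L$ is also open in $Y-L$. Hence $Y-L=\bigsqcup_i (Y_i-L)$ as a topological disjoint union, and cohomological dimension is additive under finite topological disjoint unions, giving
\[
\dim_G(Y-L)=\max_i \dim_G(Y_i-L)\le n.
\]
For condition (b), let $\iota_i\colon (Y_i,L)\hookrightarrow (Y,L)$ denote the canonical inclusion. By construction $f\circ\iota_i=f_i$, so $(f_i)_*=f_*\circ(\iota_i)_*$. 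If $\beta_i\in H_*(Y_i,L)$ satisfies $(f_i)_*(\beta_i)=\alpha_i$, then $f_*((\iota_i)_*(\beta_i))=\alpha_i$, so each $\alpha_i$ lies in $\mathrm{Im}(f_*)$.

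The only mildly delicate point is the verification that $\dim_G(Y-L)\le n$; once one recognizes that $Y-L$ is a \emph{topological} disjoint union of the pieces $Y_i-L$ (rather than merely a set-theoretic one), everything reduces to Theorem 3.9 applied componentwise, so no genuine obstacle arises.
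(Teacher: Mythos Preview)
Your proof is correct and follows essentially the same approach as the paper: apply the single-class theorem to each $\alpha_i$, glue the resulting compacta $Y_i$ together along $L$, and define $f$ piecewise. The paper verifies (b) by identifying $H_*(Y,L)\cong\bigoplus_i H_*(Y_i,L)$ via $Y/L\cong\bigvee_i(Y_i/L)$, whereas you use the cleaner functoriality argument $f\circ\iota_i=f_i$; both are fine, and the dimension argument is identical.
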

\begin{proof}
Let $\alpha_1,...,\alpha_k\in H_*(K,L)$. By the above theorem, there are compacta $Y_i\supset L$ and strict maps $f_i:(Y_i,L)\to (K,L)$ such that $\mathrm{dim}_G(Y_i-L)\leq n$ and $\alpha_i\in \mathrm{Im}(f_{i_*})$, $i\in\{1,...,k\}$.
\\Let $Y=\coprod_L Y_i$, the spaces $Y_i$ attached along $L$ via the identitiy maps $\mathrm{id}:L\subset Y_i\to L\subset Y_j$.
Consider $f:(Y,L)\to (K,L)$, where $f|_{Y_i}=f_i$. Clearly, $f$ is a strict map.
Since $Y-L=\coprod (Y_i-L)$ we obtain
$\mathrm{dim}_G(Y-L)\leq n$. 
\\For part (b), we have $H_*(Y,L)=\widetilde H_*(Y/L)=\widetilde H_*(\vee (Y_i/L))=\oplus\widetilde H_*(Y_i/L)=\oplus H_*(Y_i,L)$ and the induced map on homology \\$f_*:H_*(Y,L)=\oplus H_*(Y_i,L)\to  H_*(K,L)$ is such that $f_*|_{H_*(Y_i,L)}={f_i}_*$.

\end{proof}

In what follows we show that for $S^{CE}(M)$ finite, there exists a space $X$ and a cell-like map $q:M\to X$ such that for any element $\gamma\in S^{CE}(M)$, $\gamma$ can be traced back to $H_{n+1}(M_q,M,\mathbb L)$ by the following sequence:
$$H_{n+1}(M_q,M;\mathbb L)\stackrel{i_*}\to H_{n+1}(P_2(M),M;\mathbb{L})\stackrel{\delta}\to S(M)
$$

\begin{prop}
Let $M^n$ be a closed connected topological manifold, $n\geq6$, $p$ odd and $\alpha_1, ...,\alpha_k\in H_*(P_2(M),M;\mathbb{L}\wedge M(p))$, where $\mathbb{L}\wedge M(p)$ is $\mathbb L$-theory with coefficients in $\mathbb Z_p$. Then there is a cell-like map $q:M\to X$ and elements $\hat\alpha_1,...\hat\alpha_k\in H_*(M_q,M;\mathbb{L}\wedge M(p))$ such that $i_*(\hat\alpha_i)=\alpha_i$. 
\end{prop}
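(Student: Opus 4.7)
The plan is to combine the multi-class Dranishnikov realization result (Lemma~3.10 above) with a Dranishnikov--Ferry style construction of a cell-like map, producing a single space $X$ that simultaneously accommodates all $k$ classes.

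First, since the $\alpha_i$ are finitely many and the relative $\mathbb{L}\wedge M(p)$-homology of the CW-pair $(P_2(M),M)$ is a direct limit over finite subcomplex pairs, I may choose a finite polyhedral subcomplex $K\subset P_2(M)$ containing $M$ so that each $\alpha_i$ is the image of a class $\alpha_i'\in H_*(K,M;\mathbb{L}\wedge M(p))$ under inclusion. By the chain of equivalences $\mathbb{L}\wedge M(p)\sim\mathrm{KO}\wedge M(p)$ for odd $p$ and by Theorem~2.1, we have $\widetilde H_*(K(G,m);\mathbb{L}\wedge M(p))=0$ for every abelian group $G$ and every $m\ge 3$. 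Applying Lemma~3.10 with $G=\mathbb{Z}$ and $m=3$ to the pair $(K,M)$ and the list $\alpha_1',\ldots,\alpha_k'$ yields a compactum $Y\supset M$, a strict map $f\colon (Y,M)\to(K,M)$ with $\mathrm{dim}_{\mathbb{Z}}(Y-M)\le 3$, and classes $\hat\alpha_i\in H_*(Y,M;\mathbb{L}\wedge M(p))$ satisfying $f_*(\hat\alpha_i)=\alpha_i'$.

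The main step is to realize $Y$ as the mapping cylinder rel $M$ of a cell-like map $q\colon M\to X$. Because $\mathrm{dim}_{\mathbb{Z}}(Y-M)\le 3$ and $\dim M=n\ge 6$, the cohomological dimension of $Y-M$ is far less than the codimension available inside $M$. The idea is to embed $Y-M$ into a small open PL ball $B\subset M$ by the Dranishnikov embedding theorem for compacta of bounded $\mathbb{Z}$-cohomological dimension, and to use this embedding to define an upper semi-continuous cell-like decomposition of $M$ whose quotient $X=M/{\sim}$ is homotopy equivalent rel $M$ to $Y$. The decomposition map $q\colon M\to X$ is cell-like by construction, and the mapping cone $C(q)$ is identified, up to homotopy, with $Y/M$. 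This is the delicate part of the argument and the main obstacle: one must produce a cell-like decomposition whose quotient captures the shape of $Y-M$ on the nose, which requires the full strength of Edwards--Dranishnikov shrinking techniques adapted to Dranishnikov compacta of bounded $\mathbb{Z}$-cohomological dimension in a high-dimensional manifold.

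Finally, by Proposition~2.8 applied to $P_2(M)$ (which has $\pi_i=0$ for $i\ge 3$), the cell-like map $q$ admits a map $g\colon X\to P_2(M)$ with $g\circ q\sim j$; via the identification $X\simeq Y$ rel $M$, one may arrange that $g$ corresponds to $f$. Passing to cofibers, the map $C(q)\to C(j)=P_2(M)/M$ induced by $i\colon M_q\to M_j$ is then identified with the quotient $Y/M\to K/M\hookrightarrow P_2(M)/M$ of $f$. Under the identifications $H_*(M_q,M;\mathbb{L}\wedge M(p))\cong H_*(Y,M;\mathbb{L}\wedge M(p))$, the homomorphism $i_*$ becomes $f_*$ followed by the inclusion-induced map $H_*(K,M;\mathbb{L}\wedge M(p))\to H_*(P_2(M),M;\mathbb{L}\wedge M(p))$, so $i_*(\hat\alpha_i)=\alpha_i'\mapsto\alpha_i$, as required.
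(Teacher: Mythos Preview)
Your argument follows the paper's exactly through the first three steps: reduce to a finite subcomplex $K\subset P_2(M)$ carrying the classes, invoke $\widetilde H_*(K(\mathbb Z,3);\mathbb{L}\wedge M(p))=0$ via Theorem~2.1 and the equivalence $\mathbb L\wedge M(p)\sim\mathrm{KO}\wedge M(p)$, and apply Lemma~3.10 to obtain a compactum $Y\supset M$ and a strict map $f\colon(Y,M)\to(K,M)$ with $\dim_{\mathbb Z}(Y-M)\le 3$ and all $\alpha_i'$ in the image of $f_*$. At this point the paper simply defers: ``the rest of the proof and the case $n=6$ is exactly like the proof of Proposition~3.6 in \cite{Dr1}.'' So on the part the paper actually writes out, you are in complete agreement.

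Where you go beyond the paper is in sketching how the cell-like map $q\colon M\to X$ is produced from $(Y,M)$, and here your mechanism is not correct. There is no ``Dranishnikov embedding theorem for compacta of bounded $\mathbb Z$-cohomological dimension'': the whole point of Dranishnikov's compacta is that they can have \emph{infinite} covering dimension while $\dim_{\mathbb Z}\le 3$, so they need not embed in any finite-dimensional ball, let alone in $B\subset M$. Moreover $Y-M$ is open in $Y$, not a compactum. The passage from $(Y,M)$ to a mapping cylinder pair $(M_q,M)$ in \cite{Dr1} does not proceed by embedding $Y-M$ inside $M$ and collapsing; it is a different (and, as you say, delicate) construction. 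Your acknowledgement that this step ``requires the full strength of Edwards--Dranishnikov shrinking techniques'' is fair, but the specific outline you give would not succeed. For the purposes of this paper the right move is precisely what the paper does: cite Proposition~3.6 of \cite{Dr1} for this step rather than improvise a mechanism.
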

\begin{proof}  Let $n\geq7$ and $\alpha_1, ...,\alpha_k\in H_*(P_2(M),M;\mathbb{L}\wedge M(p))$. Then there is a finite complex $K$, $M\subset K\subset P_2(M)$ and elements $\gamma_1,...\gamma_k\in H_*(K,M;\mathbb{L}\wedge M(p))$ such that $\gamma_i$ is taken to $\alpha_i$ by the inclusion homomorphism. By Theorem 1.1 $\widetilde H_*(K(\mathbb Z,3);\mathbb{L}\wedge M(p))=0$, hence it follows from Lemma 3.8 that there is a compactum $Y\supset M$ and a strict map $f:(Y,M)\longrightarrow (K,M)$ such that $\mathrm{dim}_\Z (Y-M)\leq 3$ and $\alpha_1,...,\alpha_k\in \mathrm{Im}(f_*).$ The rest of the proof and the case $n=6$ is exactly like the proof of Proposition 3.6 in \cite{Dr1}.
\end{proof}

\begin{thm} Let $M^n$ be a closed connected topological manifold, $n\geq6$. If $\beta_1,...,\beta_k\in H_*(P_2(M), M;\mathbb{L})$ are odd torsion elements of order$(\beta_i)=p_i$, then there exist a cell-like map $q:M\to X$ 
and elements $\hat\beta_1,...,\hat\beta_k\in H_*(M_q,M,;\mathbb{L})$ such that $i_*(\hat\beta_i)=\beta_i$. 
\end{thm}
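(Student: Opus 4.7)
The plan is to reduce the integral $\mathbb{L}$-homology lifting problem to a mod-$N$ problem handled by a composite-modulus version of Proposition 3.9, and then descend by naturality of the Universal Coefficient Formula (UCF). Set $N := \mathrm{lcm}(p_1, \ldots, p_k)$; this is an odd positive integer since each $p_i$ is odd. Each $\beta_i$ is $N$-torsion, so $\beta_i \in \mathrm{Tor}(H_*(P_2(M), M; \mathbb{L}), \mathbb{Z}_N)$, and the UCF
\[
0 \to H_{*+1}(P_2(M), M; \mathbb{L}) \otimes \mathbb{Z}_N \to H_{*+1}(P_2(M), M; \mathbb{L} \wedge M(N)) \xrightarrow{\partial_N} \mathrm{Tor}(H_*(P_2(M), M; \mathbb{L}), \mathbb{Z}_N) \to 0
\]
produces lifts $\tilde{\beta}_i \in H_{*+1}(P_2(M), M; \mathbb{L} \wedge M(N))$ with $\partial_N(\tilde{\beta}_i) = \beta_i$.

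Next I would rerun the proof of Proposition 3.9 with the odd prime $p$ replaced by the odd integer $N$, applied to the tuple $\tilde{\beta}_1, \ldots, \tilde{\beta}_k$. The only input that needs verification is the vanishing $\widetilde{H}_*(K(\mathbb{Z}, 3); \mathbb{L} \wedge M(N)) = 0$. Theorem 1.1 yields $\widetilde{H}_*(K(\mathbb{Z}, 3); \mathrm{KO} \wedge M(N)) = 0$ for any integer $N > 1$, and the identification $\mathrm{KO} \wedge M(N) \simeq \mathbb{L} \wedge M(N)$ carries over to odd $N$: since $2$ is invertible mod $N$, multiplication by $2$ is an automorphism of $M(N)$, so $M(N)$ is already $2$-inverted, and $\mathrm{KO} \wedge M(N) \simeq \mathrm{KO}[\tfrac{1}{2}] \wedge M(N) \simeq \mathbb{L}[\tfrac{1}{2}] \wedge M(N) \simeq \mathbb{L} \wedge M(N)$. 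Granting this input, I obtain a cell-like map $q: M \to X$ and classes $\hat{\alpha}_i \in H_{*+1}(M_q, M; \mathbb{L} \wedge M(N))$ with $i_*(\hat{\alpha}_i) = \tilde{\beta}_i$.

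Finally, set $\hat{\beta}_i := \partial_N(\hat{\alpha}_i) \in \mathrm{Tor}(H_*(M_q, M; \mathbb{L}), \mathbb{Z}_N) \subset H_*(M_q, M; \mathbb{L})$. Naturality of the UCF under the map $i: (M_q, M) \to (M_j, M)$ provides the commutative square
\[
\xymatrix{
H_{*+1}(M_q, M; \mathbb{L} \wedge M(N)) \ar[r]^-{\partial_N} \ar[d]^{i_*} & \mathrm{Tor}(H_*(M_q, M; \mathbb{L}), \mathbb{Z}_N) \ar[d]^{i_*} \\
H_{*+1}(P_2(M), M; \mathbb{L} \wedge M(N)) \ar[r]^-{\partial_N} & \mathrm{Tor}(H_*(P_2(M), M; \mathbb{L}), \mathbb{Z}_N)
}
\]
and therefore $i_*(\hat{\beta}_i) = \partial_N(i_*(\hat{\alpha}_i)) = \partial_N(\tilde{\beta}_i) = \beta_i$, which is what the theorem demands.

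The main obstacle is the upgrade of Proposition 3.9 from a single odd prime $p$ to a (possibly composite) odd integer $N$. The spectrum-level localization argument above takes care of the $K(\mathbb{Z}, 3)$-acyclicity input cleanly, so the real concern is whether the geometric construction imported from Proposition 3.6 of \cite{Dr1} (which the proof of Proposition 3.9 cites simply as ``the rest of the proof'') uses primality of the modulus anywhere. I expect it does not, since Dranishnikov's compactification machinery in Theorem 3.7 and Lemma 3.8 is driven solely by the $K(\mathbb{Z}, 3)$-acyclicity of the coefficient theory, but this is the step in which I would need to trace through the argument most carefully.
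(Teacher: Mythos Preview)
Your argument is correct and matches the paper's proof essentially line for line: the paper sets $l=p_1p_2\cdots p_k$ (you use the lcm, which works just as well), lifts each $\beta_i$ through the UCF surjection $\phi$ to $H_{*+1}(P_2(M),M;\mathbb{L}\wedge M(l))$, invokes Proposition~3.9 to find $q:M\to X$ and preimages $\hat\alpha_i$, and then pushes forward via $\phi'$ and the naturality square to obtain the $\hat\beta_i$. Your worry about upgrading Proposition~3.9 from a prime to a composite odd modulus is unnecessary---in the paper, Proposition~3.9 and the spectrum equivalence $\mathrm{KO}\wedge M(p)\simeq\mathbb{L}\wedge M(p)$ are both stated for ``$p$ odd,'' not ``$p$ prime,'' so the composite case is already covered and the paper simply invokes it.
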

\begin{proof} Let $l=p_1p_2\cdot\cdot\cdot p_k$. Consider the following commutative diagram of universal coefficient formulas
$$
{\xymatrix@C.0in{H_{*+1}(M_q,M;\mathbb{L}\wedge M(l))\ar[r]^{\phi'}\ar[d]^{i_*}&  \mathrm{Tor}(H_*(M_q,M;\mathbb{L}),\,\Z_l)
\ar[r]^{\ \ \subset}\ar[d] & H_*(M_q,M;\mathbb{L})\ar[d]^{i_*}\\
H_{*+1}(P_2(M),M;\mathbb{L}\wedge M(l))\ar[r]^{\phi} &  \mathrm{Tor}(H_*(P_2(M),M;\mathbb{L}),\,\Z_l)\ar[r]^{\ \ \ \ \ \subset} &
H_*(P_2(M),M;\mathbb{L})\\
}}
$$
where $\phi$ and $\phi'$ are epimorphisms. Let $\beta_1,...,\beta_k\in H_*(P_2(M), M;\mathbb{L})$, thus  $\beta_1,...,\beta_k\in\mathrm{Tor}(H_k(P_2(M),M;\mathbb{L}),\,\Z_l)$. Pick $\alpha_i\in\phi^{-1}(\beta_i)$ for each $i$. By Proposition 3.9, there is a cell-like map $q:M\to X$ and $\hat\alpha_1,...\hat\alpha_k\in H_{*+1}(M_q,M;\mathbb{L}\wedge M(l))$ such that $i_*(\hat\alpha_i)=\alpha_i$. It follows from the commutativity of the diagram that $i_*(\hat\beta_i)=\beta_i$ where $\hat\beta_i=\phi'(\hat\alpha_i)$.
\end{proof}  

\begin{defn} Let $f:(Y,L)\to(X,L)$ be a strict map. A homotopy $f_t:Y\to X$ which is strict at each level is called $\emph{strict}$ if the homotopy $f_t:(Y,L)\to(X,L)$ is continuous.
\end{defn} 

\begin{defn}\cite{Dr1}
    Let $Y$ be an open manifold and let $\bar Y$ be a compactification of an end of $Y$ by $X$, $Y=\bar Y-X$ . A $\emph{strict homotopy equivalence}$ \\${near}\ X$ is
        a strict map $\bar f:(\bar W,X)\to (\bar Y,X)$, where $\bar W$ is a compactification of an end of $W$ by $X$, such that there are neighborhoods $\bar U \supset \bar V$ of $X$ in $\bar W$ and $\bar U' \supset \bar V'$ of $X$ in $\bar Y$ such that $\bar f(\bar U) \subset \bar U'$ and there is a strict map $\bar g:(\bar U',X) \to (\bar U,X)$ such that
        \begin{enumerate}
        \item [(a)] $\bar g \circ \bar f|_{\bar V}$ is strict homotopic in $\bar U$ to $id_{\bar V}.$
        \item [(b)] $\bar f \circ \bar g |_{V'}$ is strict homotopic in $\bar U'$ to $id_{\bar V'}$.
        \end{enumerate}
        \end{defn}

\begin{defn}\cite{Dr1} The set of {\it germs of continuously controlled structures} on $Y$ at $X$, denoted $S^{cc}(\bar Y,X)_{\infty}$, is the set of equivalence classes of strict homotopy equivalences of manifolds near $X$. Two strict homotopy
        equivalences near $X$, $\bar f:(\bar W,X)\to (\bar Y,X)$
        and $\bar f':(\bar W',X)\to (\bar Y,X)$ are
        {\it equivalent} if there exist a neighborhood $\bar V$ of $X$ in
        $\bar W$
        and a strict map $\bar h:(\bar V,X) \to (\bar W',X)$ which is
        an open  imbedding
        and  $\bar f'\circ \bar h:\bar V\to \bar Y$ is  strict homotopic to
        $\bar f|_{\bar V}$.
\end{defn}

\noindent Let $q:M\to X$ be a cell-like map of a closed connected manifold $M$ and $M_q$ its mapping cylinder. Let $\stackrel\circ M_q=M_q-M\times\{0\}$.
 
 \begin{rem}(refer to \cite{Dr1}, Corollary 4.6 and section 5): Let $[\bar g]$ be an element of $S^{cc}(\stackrel\circ M_q,X)_{\infty}$. Then $\bar g:(\bar W,X)\to (\stackrel\circ M_q,X)$ is a strict homotopy equivalence near $X$. We can assume that $W=N\times(0,1)$ and $\bar W=\stackrel\circ M_p$, where $p:N\to X$ is cell-like. The forgetful map $\phi:S^{cc}(\stackrel\circ M_q,X)_{\infty}\to S(M)$ sends $[\bar g]$ to $[f]$, where $f:N\to M$ is a homotopy equivalence that factors through the cell-like maps $p$ and $q$.
\end{rem}

\begin{prop}(\cite{Dr1}, Proposition 5.4) Let $M$ be a closed connected $n$-manifold and $q:M\to X$ a cell-like map. Then the forgetful map $\phi:S^{cc}(\stackrel\circ M_q,X)_{\infty}\to S(M)$ factors as 
$$
S^{cc}(\stackrel\circ M_q,X)_{\infty}\stackrel{j}\to
H_{n+1}(M_q,M;\mathbb{L})\stackrel{i_*}\to
H_{n+1}( P_2(M),M;\mathbb{L})\stackrel{\delta}\to S(M)
$$
where $j$ is a monomorphism with cokernel $\mathbb{Z}$ or $0$.
\end{prop}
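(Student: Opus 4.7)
The plan is to construct $j$ as the continuously controlled surgery obstruction and then verify that the stated composition reproduces the forgetful map $\phi$. Given a strict homotopy equivalence $\bar g:(\bar W,X)\to(\stackrel\circ M_q,X)$ near $X$, the Ferry--Pedersen--Ranicki continuously controlled surgery machinery attaches to $\bar g$ a controlled surgery obstruction living in the continuously controlled $\mathbb L$-group with control map $\stackrel\circ M_q\to X$. The input I would invoke is the standard identification of this controlled $\mathbb L$-group with the Steenrod $\mathbb L$-homology $H_{n+1}(M_q,M;\mathbb L)$ of the end pair, up to a possible $\mathbb Z$-summand (the same $\mathbb Z$-discrepancy between $S(M)$ and $S_n(M)$ recorded in the preliminaries). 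Reading $\bar g$ through this identification defines $j$.

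To verify $\phi=\delta\circ i_*\circ j$, I would trace the definitions through Remark 3.12: $\phi[\bar g]=[f:N\to M]$ with $N$ read off at the manifold end of $\bar W$. On the other route, $\delta\circ i_*$ first pushes the controlled obstruction forward along the inclusion of mapping cylinders $i:M_q\to M_j$, which is induced by the $P_2$-lift $g:X\to P_2(M)$ of $q$ furnished by Proposition 2.10, landing in $H_{n+1}(P_2(M),M;\mathbb L)\cong S_{n+1}(P_2(M),M)$; then $\partial$ sends this to $S_n(M)$, and $p$ projects to $S(M)$. Equality with $\phi[\bar g]$ follows from naturality of the surgery obstruction under forgetting control, combined with the compatibility of $i_*$ with the $P_2$-lift. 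Injectivity of $j$ and the bound on its cokernel come from the continuously controlled surgery exact sequence at $X$: the neighboring controlled $L$-terms can contribute at most one $\mathbb Z$-summand (the signature, present only when $L_{n+1}(\mathbb Z)=\mathbb Z$), so exactness forces $\ker(j)=0$ and bounds the cokernel by $\mathbb Z$ or $0$.

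The main technical obstacle is the identification of the continuously controlled $\mathbb L$-group of $(\stackrel\circ M_q,X)$ with $H_{n+1}(M_q,M;\mathbb L)$, which uses the full Ferry--Pedersen--Ranicki--Yamasaki machinery of controlled assembly together with a controlled $\pi$-$\pi$ theorem over the cell-like quotient $X$. Once this identification and its naturality under the $P_2$-lift are in hand, the remainder of the argument is bookkeeping within the algebraic surgery exact sequence.
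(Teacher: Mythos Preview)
The paper does not supply its own proof of this proposition; it is quoted as \cite{Dr1}, Proposition 5.4, and used as a black box in the proof of Theorem 3.18. There is therefore no paper-internal argument against which to compare your sketch.

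That said, your outline is broadly consistent with the continuously controlled surgery framework of \cite{Pe} and \cite{Dr1} on which the cited result rests: identifying $S^{cc}(\stackrel\circ M_q,X)_\infty$ with a continuously controlled structure group at the end, and then with Steenrod $\mathbb L$-homology of the pair $(M_q,M)$, is indeed the mechanism. One point where your sketch is imprecise is the explanation of the cokernel: the $\mathbb Z$-or-$0$ discrepancy is not really coming from ``neighboring controlled $L$-terms contributing a signature $\mathbb Z$'' in an exact sequence, but rather from the same $\mathbb L$ versus $\mathbb L_\cdot$ (equivalently $S_n(M)$ versus $S(M)$) bookkeeping recorded in the preliminaries and in Remark 3.3 --- the controlled structure set matches the unreduced theory, while $H_{n+1}(M_q,M;\mathbb L)$ uses the periodic spectrum, and these differ by at most a $\mathbb Z$. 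Your identification of the main technical input (the controlled assembly isomorphism over the cell-like quotient) is accurate; that is where the real work in \cite{Dr1} lies.
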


\begin{thm}
Let $M$ be an $n$-dimensional closed connected topological manifold, $n\ge6$ and $G$ be a finitely generated subgroup of  $S^{CE}(M)$. Then there exists a space $X$ such that for any $[N,f]\in G$, $f:N\to M$ factors through cell-like maps to $X$.
\end{thm}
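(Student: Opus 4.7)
The plan is to combine Theorem 3.10 with the continuously controlled interpretation in Proposition 3.14, exploiting the fact that the lifts produced in Theorem 3.10 are automatically torsion and hence live in the image of $j$.

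First I would pick a finite generating set $[N_1,f_1],\dots,[N_k,f_k]$ for $G\subset S^{CE}(M)$. By \theoref{2.11}, each $[N_i,f_i]$ equals $\delta(\beta_i)$ for some odd torsion element $\beta_i\in H_{n+1}(P_2(M),M;\mathbb{L})$, say of order $p_i$. Set $l=p_1p_2\cdots p_k$. Then I would apply \theoref{3.10} to the finite list $\beta_1,\dots,\beta_k$ simultaneously: this produces a single cell-like map $q:M\to X$ together with classes $\hat\beta_i\in H_{n+1}(M_q,M;\mathbb{L})$ satisfying $i_*(\hat\beta_i)=\beta_i$. Crucially, the construction in the proof of \theoref{3.10} manufactures $\hat\beta_i=\phi'(\hat\alpha_i)$ with $\hat\alpha_i\in H_{n+2}(M_q,M;\mathbb{L}\wedge M(l))$, so $\hat\beta_i$ lies in $\mathrm{Tor}(H_{n+1}(M_q,M;\mathbb{L}),\mathbb{Z}_l)$; in particular each $\hat\beta_i$ is torsion.

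Next I would invoke \propref{3.14}: the monomorphism $j:S^{cc}(\stackrel\circ M_q,X)_\infty\to H_{n+1}(M_q,M;\mathbb{L})$ has cokernel $\mathbb{Z}$ or $0$, so its image contains every torsion element. Therefore each $\hat\beta_i$ lifts to some $[\bar g_i]\in S^{cc}(\stackrel\circ M_q,X)_\infty$. Applying $\phi=\delta\circ i_*\circ j$ gives $\phi([\bar g_i])=\delta(\beta_i)=[N_i,f_i]$, and by \remref{3.13} this is precisely the statement that $f_i$ is homotopic to the composition of a cell-like map $N_i\to X$ with a homotopy inverse of $q:M\to X$. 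To extend this from generators to all of $G$, note that $\phi$ is a homomorphism, so $\mathrm{Im}(\phi)$ is a subgroup of $S(M)$; since $G\cap\mathrm{Im}(\phi)$ is a subgroup of $G$ containing every generator, it equals $G$, and every element of $G$ factors through cell-like maps to this same $X$.

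The main obstacle is the torsion bookkeeping in the middle step: one must be sure that the lifts produced by \theoref{3.10} land in the image of $j$, rather than in some coset that projects to a nonzero class in the $\mathbb{Z}$-cokernel. That is exactly why Theorem 3.10 is stated in terms of the universal coefficient epimorphism $\phi'$ with $\mathbb{L}\wedge M(l)$ coefficients, rather than producing arbitrary lifts of $\beta_i$. Everything else — the reduction to finitely many generators, the closure under the group law, and the translation from $S^{cc}$ to the factorization statement via \remref{3.13} — is essentially formal once the torsion lift is secured.
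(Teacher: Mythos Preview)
Your proposal is correct and follows essentially the same route as the paper: lift finitely many odd-torsion preimages simultaneously to $H_{n+1}(M_q,M;\mathbb{L})$ for a single cell-like $q:M\to X$, then invoke the factorization $\phi=\delta\circ i_*\circ j$ from the continuously-controlled proposition. The only differences are cosmetic---the paper first notes (via the fact that $S^{CE}(M)$ is odd torsion) that $G$ is actually finite and simply lists all its elements rather than closing up from generators via the homomorphism property of $\phi$, and it leaves the torsion argument showing $\hat\beta_i\in\mathrm{Im}(j)$ implicit where you spell it out.
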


\begin{proof} It follows from Theorem 2.10 that $G$ is finite. Let $\gamma_1,...,\gamma_k\in G$. Then there are odd torsion elements $\gamma'_1,...\gamma'_k$ in $H_{n+1}(P_2(M),M;\mathbb{L})$ such that $\partial(\gamma_i)=\gamma'_i$. By Theorem 3.12, there exist a cell-like map $q:M\to X$ and elements $\beta_1,...,\beta_k\in H_*(M_q,M,;\mathbb{L})$ such that $i_*(\beta_i)=\gamma'_i$. The result follows from Proposition 3.17.
\end{proof}

\begin{thm}
For $n\geq 6$ and any $m$, there are $m$ closed non-homeomorphic $n$-manifolds that can be mapped by cell-like maps onto the same space $X$.
\end{thm}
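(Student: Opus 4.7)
The plan is to combine Corollary 3.4, Corollary 3.5, and Theorem 3.18, which together already do essentially all the work. First I would fix a prime $p \geq m$ and apply Corollary 3.4 to produce a closed simply connected $n$-manifold $M$ with $S^{CE}(M)\supseteq \mathbb{Z}_p$. This hands me at least $p \geq m$ distinct classes in $S^{CE}(M)$, and by Corollary 3.5 any two of these classes are represented by closed $n$-manifolds that are pairwise non-homeomorphic. Pick representatives $(N_0,f_0)=(M,\mathrm{id}_M), (N_1,f_1),\ldots,(N_{m-1},f_{m-1})$ so that $N_0,\ldots,N_{m-1}$ are pairwise non-homeomorphic; by construction each $f_i\colon N_i\to M$ factors through cell-like maps through \emph{some} intermediate space $X_i$, but a priori the $X_i$ might all differ.

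The substantive step is the amalgamation: produce a single space $X$ through which all $m$ classes factor simultaneously. For this I would first invoke Theorem 2.11, which shows that $S^{CE}(M)$ is the odd torsion of the finite group $S(M)$ (since $M$ is simply connected with finite $\pi_2(M)$ in the constructions of Corollary 3.4), hence finite and in particular finitely generated. Then Theorem 3.18 applied to $G = S^{CE}(M)$ delivers a single space $X$ such that every class in $S^{CE}(M)$ factors through cell-like maps to $X$. Tracing through its proof via Theorem 3.12 and Proposition 3.17, one sees that $X$ comes equipped with one common cell-like map $q\colon M\to X$, and each $f_i$ factors as $q\circ f_i \simeq q_i$ for a cell-like map $q_i\colon N_i\to X$.

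Consequently the $m$ pairwise non-homeomorphic closed $n$-manifolds $N_0, N_1, \ldots, N_{m-1}$ each admit a cell-like map onto the common space $X$, which is the claim. The only step beyond straightforward bookkeeping is the existence of a single amalgamating space $X$ accommodating all elements of $G$ at once — precisely the content of Theorem 3.18 — so I do not expect any genuine obstacle here; the heavy lifting has been done in the preceding results of the paper.
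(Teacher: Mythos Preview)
Your proposal is correct and follows essentially the same route as the paper's own proof: take the manifold $M$ from Corollary~3.4, use the fact that distinct classes in $S^{CE}(M)$ give non-homeomorphic manifolds (this is Corollary~3.6 in the paper's numbering, not 3.5, which is a definition), and then invoke Theorem~3.18 to obtain a single common target $X$. Your extra justification that $S^{CE}(M)$ is finite via Theorem~2.11 is fine but unnecessary, since Corollary~3.4 already computes $S^{CE}(M)$ explicitly as $\mathbb{Z}_p$ or $\mathbb{Z}_p\oplus\mathbb{Z}_p$.
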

\begin{proof}
Let $p$ and $M$ be as in Corollary 3.4. Then, by Corollary 3.6, any two classes in $S^{CE}(M)$ contain non-homeomorphic manifolds. The result follows from Theorem 3.18.
\end{proof}


\begin{thebibliography}{999}

\bibitem{AH}
D. Anderson, L. Hodgkin,
\newblock {\em The $K$-theory of Eilenberg--MacLane complexes},
\newblock Topology 11, (1972), 371--375.

\bibitem{B}
R.H. Bing 
\newblock{The Collected Papers of R. H. Bing (Collected Works)}
\newblock American Mathematical Society, 1988


\bibitem{BM}
V.M. Buhshtaber, A.S. Mishchenko,
\newblock {\em $K$--theory on the category of infinite cell complexes},
\newblock Izv. Akad. Nauk SSSR Ser. Mat., 32, (1968), 560--604.


\bibitem{CP} G. Carlsson and E. Pedersen, 
\newblock {\em Čech homology and the Novikov conjectures for K- and L-theory},
\newblock  Math. Scand.  82  (1998),  no. 1, 5--47.

\bibitem{Co}
H. Cohen,
\newblock{\em A cohomological definition of dimension for locally compact Hausdorff spaces},
\newblock Duke Math. J. 21 (1954), 209-224.  MR 16-609

\bibitem{Dr}
A. Dranishnikov,
{\em Homological dimension theory.} Russian Math. Surveys 43 (4) (1988), 11-63.




\bibitem{Dr2}
A. Dranishnikov,
\newblock {\em Cohomological dimension theory of compact metric spaces},
\newblock Topology Atlas Invited Contributions 6, No 3 (2001) 61 pp
( arXiv preprint math.GN/0501523).

\bibitem{Dr}
A. Dranishnikov, S. Ferry, and S. Weinberger
\newblock{\em An infinite-dimensional phenomenon in finite-dimensional metric topology}
\newblock  arXiv:math/0611004, October, 2006.

\bibitem{Dr1}
A. Dranishnikov, S. Ferry, and S. Weinberger
\newblock{\em An infinite-dimensional phenomenon in finite-dimensional metric topology}
\newblock Cambridge Journal of Mathematics, volume 8 (2020), number 1, pp. 95-147.
 

\bibitem{EH}
D. Edwards and H. Hastings
\newblock{Čech and Steenrod homotopy theories with applications to geometric topology}.
\newblock Lecture Notes in Mathematics, Vol. 542. Springer-Verlag, Berlin-New York,  1976.

\bibitem{Edwards}
R. D. Edwards,
\newblock {\em The topology of manifolds and cell-like maps,}
\newblock{ Proceedings of the International Congress of Mathematicians (Helsinki, 1978), pp. 111-127, Acad. Sci. Fennica, Helsinki, 1980.}

\bibitem{F1}
S. Ferry,
 \newblock Remarks on Steenrod homology
 {\em Novikov conjectures, index theorems and rigidity},
 \newblock Vol. 2.
 London Math. Soc. Lecture Note Ser., 226,
Cambridge Univ. Press, Cambridge, 1995, 148--166.

\bibitem{F2}
S. Ferry,
 \newblock {\em Topological finiteness theorems for manifolds in
 Gromov-Hausdorff space},
 \newblock Duke Math. Journal, 74, No 1,1994, 95--106.



\bibitem{KS}
J. Kaminker and C. Schochet,
\newblock {\em $K$-theory and Steenrod homology: applications to the
Brown-Douglas-Fillmore theory of operator algebras},
\newblock Trans. Amer. Math. Soc., 227, 1977, 63--107.

\bibitem{KS1}
R. Kirby and L. C. Siebenmann,
\newblock{\em Foundational essays on topological manifolds, smoothings, and
triangulations},
\newblock{Annals of Math. Studies 88, Princeton University Press, 1977}


\bibitem{La1}
R.C. Lacher,
\newblock {\em Cell-like spaces},
\newblock Proc. Amer. Math. Soc.,  20,  1969, 598--602.

\bibitem{La2}
R.C. Lacher,
\newblock {\em Cell-like mappings. I},
\newblock Pacific J. Math.,  30,  1969, 717--731.


\bibitem{La3}
R.C. Lacher,
\newblock {\em Cell-like mappings and their generalizations},
\newblock Bull. Amer. Math. Soc.(2), 83 No 4, 1977, 495--552.

\bibitem{Milnor1}
   J. Milnor   
 \newblock On the Steenrod homology theory
  \newblock Vol. 1. London Math. Soc. Lecture Note Ser., 226, Cambridge Univ. Press, Cambridge, 1995, 79--96.
  
  \bibitem{M}
Teresa Engel Moore, 
\newblock{\em Gromov-Hausdorff convergence to nonmanifolds,}
\newblock J. Geom. Anal., 3, 5, 1995, 411--418.
  
\bibitem{Pe}
E. Pedersen,
 \newblock Continuously controlled surgery theory {\em
 Surveys on surgery theory},
 \newblock Vol. 1, 307-321.
 Ann. of Math. Stud., 145,
Princeton Univ. Press, Princeton, NJ, 2000.

\bibitem{Q}
F. Quinn,
 \newblock {\em Ends of maps. I},
 \newblock
 Ann. of Math.(2), 110 No 2, 1979, 275--331.


  
\bibitem{Ra}
A.A. Ranicki,
\newblock {\em Algebraic L-theory and
topological manifolds},
\newblock Cambridge Univ. Press,
Cambridge, 1992.

\bibitem{Wall}
C.T.C. Wall,
\newblock {\em Surgery on compact manifolds},
\newblock Academic Press, 1970, 2nd edition Amer. Math. Soc. Surveys and
Monographs 69, AMS, 1999.

\bibitem{Wa}
J.J. Walsh,
\newblock {\em Dimension, cohomological dimension, and cell-like
mappings,  Shape theory and geometric topology},
\newblock Springer Lecture Notes in Math., 870, 1981, 105--118.



\bibitem{Y}
Z. Yosimura,
\newblock {\em A note on complex $K$-theory of infinite CW-complexes},
\newblock J. Math. Soc. Japan, 26, 1974, 289--295.
  
\end{thebibliography}
\end{document}